\newtheorem{theorem}{Theorem}[section]
\newtheorem{corollary}[theorem]{Corollary}
\newtheorem{lemma}[theorem]{Lemma}
\theoremstyle{definition}
\newtheorem{definition}[theorem]{Definition}
\theoremstyle{remark}
\newtheorem{remark}[theorem]{\bf{Remark}}
\numberwithin{equation}{section}
\theoremstyle{remark}
\newcommand{\R}{\mathbb R}
\newcommand{\C}{\mathbb C}
\newcommand{\N}{\mathbb N}
\begin{document}

\title[Fatou theorem and its converse] {Fatou theorem and its converse for positive eigenfunctions of the Laplace-Beltrami operator on Harmonic $NA$ groups}
\author[S.K. Ray and J. Sarkar]{Swagato K. Ray and Jayanta Sarkar}
\address[S.K. Ray]{Stat Math Unit, Indian Statistical Institute, 203 B. T. Road, Kolkata 700108}
\email{swagato@isical.ac.in}
\address[J. Sarkar]{Stat Math Unit, Indian Statistical Institute, 203 B. T. Road, Kolkata 700108}
\email{jayantasarkarmath@gmail.com}
\subjclass[2010]{Primary  43A80, 31B25, 35R03; Secondary 28A15, 44A35}
\keywords{Harmonic $NA$ group, Eigenfunctions of Laplace-Beltrami operator, Fatou-type theorems, Admissible convergence, Derivative of measures.}
\begin{abstract}
We prove a Fatou-type theorem and its converse for certain positive eigenfunctions of the Laplace-Beltrami operator $\mathcal{L}$ on a Harmonic $NA$ group. We show that a positive eigenfunction $u$ of $\mathcal{L}$ with eigenvalue $\beta^2-\rho^2$, where $\beta\in (0,\infty)$, has admissible limit in the sense of Korányi, precisely at those boundary points where the strong derivative of the boundary measure of $u$ exists. Moreover, the admissible limit and the strong derivative are the same. This extends a result of Ramey and Ullrich regarding nontangential convergence of positive harmonic functions on the Euclidean upper half space. 
\end{abstract}
\maketitle
\section{Introduction}
Throughout this article, whenever we say $\mu$ is a measure on some locally compact Hausdroff space, we will always mean $\mu$ is a complex Borel measure or a signed Borel measure such that the total variation $|\mu|$ is locally finite, that is, $|\mu|(K)$ is finite for all compact sets $K$. If $\mu(E)$ is nonnegative for all Borel measurable sets $E$ then $\mu$ will be called a positive measure. Our starting point is a classical result of Fatou which relates the differentiability property of a measure $\mu$ on $\R^l$, $l\in\N$ at a given boundary point $x_0\in\R^l$ with the boundary behavior of its Poisson integral $P\mu$ at $x_0$. In order to state Fatou's theorem, we recall that the Poisson integral $P\mu$ on the Euclidean upper half space $\R^{l+1}_+=\{(x,y)\mid x\in\R^l,y>0\}$ of a measure on $\R^l$ is defined by the convolution $P\mu(x,y)=\mu\ast P_y(x)$, $(x,y)\in\R^{l+1}_+$, where the Poisson kernel $P_y$ is given by
\begin{equation*}
P_y(x)=\pi^{-(l+1)/2}\Gamma\left(\frac{l+1}{2}\right)\frac{y}{(y^2+\|x\|^2)^{\frac{l+1}{2}}},\:\:\:\:\:(x,y)\in\R^{l+1}_+.
\end{equation*}
It is well-known that if $P\mu(x_0,y_0)$ is finite at some point $(x_0,y_0)\in\R^{l+1}_+$, then $P\mu$ is well-defined in $\R^{l+1}_+$ and is a harmonic function there. A well-known result of Fatou says that if for some $x_0\in\R^l$ and $L\in \C$ we have
\begin{equation*}
D_{sym}\mu(x_0):=\lim_{r\to 0}\frac{\mu(B(x_0,r))}{|B(x_0,r)|}=L,
\end{equation*}
where $|E|$ is the Lebesgue measure of a measurable set $E\subset\R^l$, then it follows that
\begin{equation*}
\lim_{y\to 0}P\mu(x_0,y)=L.
\end{equation*}
The quantity $D_{sym}\mu(x_0)$, if it exists, is known as the symmetric derivative of $\mu$ at $x_0$. A generalization of this result was proved by Saeki in \cite{Sa} for more general approximate identities instead of the Poisson kernel (see \cite[Theorem 2.3]{Sar1} for the precise statement). For $l=1$, it was shown by Loomis \cite{L} that the converse of the above result of Fatou is false in general, but it remains true if $\mu$ is assumed to be a positive measure. Later, Rudin \cite[Theorem A]{Ru} extended the result of Loomis by proving converse of Fatou's theorem for positive measures on $\R^l$. It is well-known that positive harmonic functions in $\R^{l+1}_+$ are characterized by positive measures on the boundary $\R^l$ of $\R^{l+1}_+$ in the following sense \cite[Theorem 7.26]{ABR}: 
\emph{if $u$ is a positive harmonic function in $\R^{l+1}_+$, then there exists a unique positive measure $\mu$ on $\R^l$ (known as the boundary measure of $u$) and a nonnegative constant $C$ such that 
\begin{equation*}
u(x,y)=Cy+P\mu(x,y),\:\:\:\text{for all}\:\:\:(x,y)\in \R^{l+1}_+.
\end{equation*}}
Using this characterization and Loomis-Rudin theorem, we obtain the following equivalence of boundary behavior of a positive harmonic function in $\R^{l+1}_+$ along the normal at a given boundary point with the existence of symmetric derivative of its boundary measure at that point.
\begin{theorem}\label{fatourudin}
Let $u$ be a positive harmonic function in $\R^{l+1}_+$ with the boundary measure $\mu$. Suppose that $x_0\in\R^l$, $L\in [0,\infty)$. Then $D_{sym}\mu(x_0)=L$, if and only if 
\begin{equation*}
\lim_{y\to 0}u(x_0,y)=L.
\end{equation*}
\end{theorem}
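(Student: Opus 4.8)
The plan is to reduce the statement to the two classical ingredients already recalled above, together with the integral representation of positive harmonic functions on $\R^{l+1}_+$. First I would invoke the characterization \cite[Theorem 7.26]{ABR}: since $u$ is a positive harmonic function with boundary measure $\mu$, there is a constant $C\geq 0$ with
\[
u(x_0,y)=Cy+P\mu(x_0,y),\qquad y>0.
\]
The role of this step is that the extra term $Cy$ is harmless for the normal limit: since $Cy\to 0$ as $y\to 0^+$, the limit $\lim_{y\to 0}u(x_0,y)$ exists if and only if $\lim_{y\to 0}P\mu(x_0,y)$ exists, and when they exist they are equal. Thus Theorem \ref{fatourudin} is equivalent to the corresponding statement with $P\mu$ in place of $u$.

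With this reduction the forward implication is immediate from Fatou's theorem as stated above: if $D_{sym}\mu(x_0)=L$, then $\lim_{y\to 0}P\mu(x_0,y)=L$, whence $\lim_{y\to 0}u(x_0,y)=L$. For the reverse implication the key observation is that $\mu$, being the boundary measure of a \emph{positive} harmonic function, is itself a positive measure; this is precisely the hypothesis under which the converse of Fatou's theorem is available. Assuming $\lim_{y\to 0}u(x_0,y)=L$, the reduction gives $\lim_{y\to 0}P\mu(x_0,y)=L$, and the Loomis--Rudin theorem \cite{L,Ru} then yields $D_{sym}\mu(x_0)=L$.

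All the analytic content is carried by these two cited results; the only genuine bookkeeping is handling the linear term $Cy$ and noting that positivity of $u$ forces positivity of $\mu$, which is exactly what makes the converse direction legitimate. I do not anticipate a real obstacle here: finiteness of $u$ at one interior point guarantees that $P\mu$ is well defined and that the representation holds throughout $\R^{l+1}_+$, and the restriction $L\in[0,\infty)$ is automatically consistent, since the symmetric derivative of a positive measure is nonnegative wherever it exists.
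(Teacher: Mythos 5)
Your proposal is correct and follows exactly the route the paper intends: it derives the theorem from the representation $u=Cy+P\mu$ of \cite[Theorem 7.26]{ABR}, Fatou's theorem for the forward implication, and the Loomis--Rudin converse for positive measures in the other direction, with the term $Cy$ disposed of by letting $y\to 0$. The paper itself gives no separate proof beyond asserting this same combination, so there is nothing to add.
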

Recently, one of the authors \cite[Theorem 2.10]{Sar1} extended the Loomis-Rudin theorem for more general approximate identities and then used this result to prove an analogue of Theorem \ref{fatourudin} for positive eigenfunctions of the Laplace-Beltrami operator $\Delta_{\mathbb{H}^l}$ on real hyperbolic spaces $\mathbb{H}^l=\{(x,y)\mid x\in \R^{l-1}, y\in (0,\infty)\}$, $l\geq 2$, equipped with the standard hyperbolic metric $ds^2=y^{-2}(dx^2+dy^2)$. 
However, a counter example by Rudin shows that the analogue of Theorem \ref{fatourudin} is false for complex hyperbolic spaces \cite[Example 5.4.13]{Ru2}. 
In this paper we will talk about a result by Ramey and Ullrich \cite{UR} dealing with positive harmonic functions on $\R^{l+1}$, which is closely related to Theorem \ref{fatourudin}.
\begin{theorem}[{{\cite[Theorem 2.2]{UR}}}]\label{rameyullrich}
Let $u$ be a positive harmonic function in $\R^{l+1}_+$ with the boundary measure $\mu$. Suppose that $x_0\in\R^l$, $L\in [0,\infty)$. Then $\mu$ has strong derivative $L$ at $x_0$, if and only if $u$ has nontangential limit $L$ at $x_0$.
\end{theorem}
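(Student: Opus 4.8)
The plan is to normalize by dilations and thereby reduce both conditions to a single statement about a family of positive harmonic functions as the dilation parameter tends to $0$. First I would set $u_r(x,y)=u(x_0+rx,ry)$ for $r>0$; each $u_r$ is again positive harmonic on $\R^{l+1}_+$, and a change of variables in the Poisson integral gives $u_r=P\mu_r+Cr\,y$, where $\mu_r$ is the dilated measure $\mu_r(E)=r^{-l}\mu(x_0+rE)$. Writing $\omega_l=|B(0,1)|$, one has the identity
\[
\frac{\mu_r(B(z,1))}{\omega_l}=\frac{\mu(B(x_0+rz,r))}{|B(x_0+rz,r)|},
\]
so $\mu$ has strong derivative $L$ at $x_0$ precisely when $\mu_r(B(z,1))/\omega_l\to L$ as $r\to0$, uniformly for $z$ in compact sets; likewise $u$ has nontangential limit $L$ at $x_0$ precisely when $u_r\to L$ uniformly on compact subsets of $\R^{l+1}_+$. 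Since Harnack's inequality upgrades pointwise convergence of positive harmonic functions to local uniform convergence, it suffices to control $u_r$ pointwise. Because $Cr\,y\to0$ locally uniformly, the whole problem becomes the equivalence, as $r\to0$, between $u_r\approx P\mu_r\to L$ and the uniform ball-average convergence of $\mu_r$.

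For the forward implication I would assume the uniform ball-average convergence and show $P\mu_r(x,y)\to L$ for each fixed $(x,y)\in\R^{l+1}_+$. Writing $P\mu_r(x,y)=\int_{\R^l}P_y(x-t)\,d\mu_r(t)$ and expressing this through the distribution function of $s\mapsto\mu_r(B(x,s))$ against the radial profile of $P_y$ (a layer-cake computation), I would insert $\mu_r(B(x,s))=\omega_l s^l(L+o(1))$ and obtain $P\mu_r(x,y)\to L\int_{\R^l}P_y=L$, uniformly on compacta. This is the off-center, uniform version of the classical Fatou/Saeki estimate; the point is that it is exactly the uniformity of the averages over a \emph{full cone} of centers — the strong derivative rather than the mere symmetric derivative — that permits the center $x$ of the Poisson kernel to move tangentially. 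No positivity is needed here.

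The reverse implication is where the real work lies, and I expect it to be the main obstacle. Assuming $u_r\to L$ locally uniformly, a standard argument — using the uniform mass bound $\mu_r(B(z,1))/\omega_l\le c\,u_r(z,1)$ coming from the pointwise lower bound $P_1(w)\ge c'$ for $|w|\le1$, together with local uniform convergence of the $u_r$ — forces the boundary measures to converge weak-$*$: $\mu_r\to L\,dx$. The difficulty is that weak-$*$ convergence is far weaker than the uniform convergence of ball averages that must be proved, and bridging this gap is precisely where positivity is indispensable, the implication being false for signed measures by Loomis's example. The bridge I would use is a no-concentration estimate: since $L\,dx$ assigns mass $O(\delta)$ to any $\delta$-thin spherical shell and charges no sphere, a bump-function sandwich shows that the $\mu_r$-mass of shells $\{\,1-\delta\le|w-z|\le1+\delta\,\}$ is uniformly small for small $r$. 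Then, if $r_j\to0$ and $z_j\to z_*$ with $|z_j|\le\alpha$, the symmetric difference $B(z_j,1)\triangle B(z_*,1)$ lies in such a shell and carries vanishing mass, so $\mu_{r_j}(B(z_j,1))\to L\omega_l$ by weak-$*$ convergence (here using that $L\,dx$ does not charge $\partial B(z_*,1)$). A compactness argument then promotes this to the required uniform convergence $\mu_r(B(z,1))/\omega_l\to L$ on compact sets, completing the proof. The crux, throughout, is this passage from weak-$*$ convergence of the rescaled measures to uniform control of their ball averages — the Loomis–Rudin phenomenon, and the reason positivity of $u$ cannot be dropped.
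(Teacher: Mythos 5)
The paper does not actually prove this theorem; it quotes it from Ramey--Ullrich, and its own contribution is the $NA$-group analogue (Theorems 4.1--4.2), whose proof follows the same template. Your proposal matches that template in its essential architecture: rescale $u$ and $\mu$ by dilations centred at $x_0$, show the rescaled measures $\mu_r$ converge weak-$*$ to $L\,dx$, and pass from weak-$*$ convergence to convergence of ball masses by a portmanteau/no-concentration argument (the paper's Lemmas 3.1 and 3.2). Two structural differences are worth noting. You bypass the Montel/normal-families compactness step entirely by using the full ``every cone'' nontangential hypothesis to get local uniform convergence of $u_r$ on all of $\R^{l+1}_+$; this is legitimate for the theorem as stated, but it would not yield the stronger single-cone version that the paper actually proves (Theorem 4.2(i)), where convergence is assumed only in one cone $\Gamma_\theta$ and real-analyticity of the subsequential limit is used to propagate the constant value from that cone to the whole half-space. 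And your forward direction is a direct Saeki-type kernel estimate rather than a second pass through the compactness machinery, which is more elementary where it works.

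The concrete gap is in that forward direction. Your layer-cake computation inserts $\mu_r(B(x,s))=\omega_l s^l(L+o(1))$ into $\int_0^\infty \mu_r(B(x,R(\sigma)))\,d\sigma$, but the $o(1)$ is uniform only for $s$ in bounded sets: taking $s=T/r$ gives $\mu_r(B(0,s))/(\omega_l s^l)=\mu(B(x_0,T))/|B(x_0,T)|$, a fixed number with no reason to be near $L$. To pass to the limit you need a dominating function, i.e.\ $\sup_{r,s}\mu_r(B(x,s))/s^l<\infty$, which is finiteness of the centred maximal function $M_{HL}\mu(x_0)$. The strong derivative controls only small radii; for large radii the boundary measure of a positive harmonic function need only satisfy $\mu(B(x_0,t))=O(t^{l+1})$, so $M_{HL}\mu(x_0)$ can be infinite. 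The standard repair --- carried out explicitly in the paper's proof of Theorem 4.2 --- is to first replace $\mu$ by its restriction to a ball around $x_0$ (a finite measure with the same strong derivative) after checking that the Poisson integral of the remainder tends to $0$ nontangentially at $x_0$; you should state this reduction. Relatedly, the identification of the weak-$*$ limit of $\mu_r$ as $L\,dx$ in the reverse direction, which you compress to ``a standard argument,'' is one of the two genuinely nontrivial lemmas: normal convergence of positive harmonic functions does force weak-$*$ convergence of the boundary measures, but the proof requires the uniform mass bound together with a majorization of $f-f\ast P_a$ by $\epsilon P_1$ to control the error uniformly in $r$, exactly as in the paper's Lemma 3.1, and one must also rule out mass escaping into the linear term $Cy$ of the Herglotz representation. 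Neither issue is fatal, but both must be filled in for the argument to close.
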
 
According to \cite{UR}, a measure $\mu$ is said to have the strong derivative $L$ at $x_0$, if
\begin{equation*}
\lim_{r\to 0}\frac{\mu(x_0+rB)}{|rB|}=L
\end{equation*}
holds for every open ball $B\subset\R^l$, where $rB=rB(x,s)=B(rx, rs)$, $x\in \R^l$, $s\in (0,\infty)$. A generalization of Theorem \ref{rameyullrich} has recently been proved by Logunov \cite[Theorem 10]{Lo} for positive solutions of more general uniformly elliptic operators in $\R^{l+1}_+$.  

In this article, we will prove an analogue of Theorem \ref{rameyullrich} for certain positive eigenfunctions $u$ of the Laplace-Beltrami operator on a class of Riemannian manifolds which includes all Riemannian symmetric spaces of noncompact type with real rank one (excluding real hyperbolic spaces), namely, the harmonic $NA$ groups (also known as Damek-Ricci spaces). Here $N$ is a group of Heisenberg type, $A=(0,\infty)$ acts on $N$ as nonisotropic dilation and $S=NA$ is the semidirect product under the action of dilation. For unexplained notions and terminologies we refer the reader to Section $2$. It is known that $S$ is a Riemannian manifold with respect to a metric which is left invariant under the action of $S$ (see \cite{ADY}). In this context, the correct analogue of the notion of nontangential convergence is the notion of admissible convergence introduced by Korányi \cite{Kocom} for complex hyperbolic spaces. In \cite{Kosym}, Korányi extended the notion of admissible convergence from complex hyperbolic spaces to rank one Riemannian symmetric space of noncompact type and proved the Fatou theorem regarding admissible convergence of the Poisson integral of an integrable function almost everywhere on the Furstenberg boundary. For the definition of admissible convergence we refer the reader to Definition \ref{impdefnna}, i). There is an extensive literature concerning extension and generalization of this result (see for example \cite{KoTa}, \cite{Mi}, \cite{Sch}, \cite{Sjo} and references therein). This version of Fatou theorem regarding almost everywhere admissible convergence was further extented by Michelson \cite{Mi} for eigenfunctions of the Laplace-Beltrami operator on Riemannian symmetric spaces of noncompact type. However, this body of literature does not seem to contain any result relating the notion of admissible convergence of positive eigenfunctions of the Laplace-Beltrami operator and differentiation property of boundary measures on Riemannian symmetric spaces of noncompact type or more generally on $S$ at a given point in the relevant boundary, which is $N$. Let us now briefly describe the main result of this article. Let $u$ be a positive eigenfunction of the Laplace-Beltrami operator $\mathcal{L}$ on $S$, a Harmonic $NA$ group. By a result in \cite{DR} (see Lemma \ref{positiveeigen}), $u$ is essentially given by a Poisson type integral of a positive measure $\mu$ defined on $N$. On the other hand, since $N$ is a group of Heisenberg type, it comes equip with a homogeneous norm giving rise to a quasi-metric invariant under the action of $N$. These structural facts help us to define an analogue of the notion of strong derivative on the group $N$. With the aid of these basic notions, the main result (Theorem \ref{mainthna}) of this paper says that given a point $n_0\in N$, a suitable positive eigenfunction $u$ of the Laplace-Beltrami operator has admissible limit $L$ at $n_0$ if and only if the strong derivative of $\mu$ at $n_0$ is equal to $L$. Our proof is modelled on the original proof by Ramey and Ullrich but application of some recent results  are also necessary. For example,  a recent result of Bar \cite[Theorem 4]{B} on generalization of Montel’s theorem plays an important role in the proof of the main theorem.

Throughout the paper, we reserve the letters $c$, $C$, $C'$ for positive constants whose values are unimportant and can change at each occurrence, unless otherwise stated. We also use notation like $C_{\delta}$ to indicate the dependency on the parameters $\delta$. 

This paper is organised as follows. In section 2, we will discuss some basic information about Harmonic $NA$ groups, generalized Poisson kernel and Poisson integral on these groups. In section 3 we prove some results which are crucial for the proof of the main theorem. The statement and proof of the main theorem (Theorem \ref{mainthna}) is given in the last section. 

\section{Preliminaries of Harmonic $NA$ groups}
A harmonic $NA$ group is a solvable Lie group as well as a Harmonic manifold. Their distinguished prototypes are the Riemannian symmetric spaces of noncompact type with real rank one. However, the rank one Riemannian symmetric spaces of noncompact type form a very small subclass in the class of Harmonic $NA$ groups \cite[1.10]{ADY}. In the following, we discuss them in detail. Most of these material can be found in \cite{FS, ADY, ACD}.

Let $\mathfrak n$ be a two-step real nilpotent Lie algebra equipped with an inner product $\langle,\rangle$. Let $\mathfrak z$ be the centre of $\mathfrak n$ and $\mathfrak v$ its orthogonal complement. We say that $\mathfrak n$ is an $H$-type algebra if for every $Z\in \mathfrak z$  the map $J_Z:\mathfrak v\to \mathfrak v$ defined by
\begin{equation*}
\langle J_Z X, Y\rangle=\langle [X, Y], Z\rangle,\ X, Y\in \mathfrak v,
\end{equation*}
satisfies the condition
$J_Z^2=-|Z|^2I_{\mathfrak v}$, $I_{\mathfrak v}$ being the identity operator on $\mathfrak v$. A connected and simply connected Lie group $N$ is called an $H$-type group if its Lie algebra is a $H$-type algebra. Since $\mathfrak{n}$ is nilpotent, the exponential map is a diffeomorphism and hence we can parametrize elements of $N=\exp\mathfrak{n}$ by $(X,Z)$ for $X\in\mathfrak{v},\:Z\in\mathfrak{z}$.  It follows from the Baker-Campbell-Hausdorff formula that the group law of $N$ is given by
$$(X,Z)(X',Z')=(X+X',Z+Z'+\frac{1}{2}[X,X']).$$
When $\mathfrak z=\R$, $\mathfrak v=\R^{2l}$ and for $s\in\R$, $J_s:\R^{2l}\to \R^{2l}$ is given by
\begin{equation*}
J_s(x,y)=(-sy,sx),\:\:\:\:\:x\in\R^l,\:y\in\R^{l},
\end{equation*} 
then we get the Heisenberg group $H^l$ which is the prototype of a $H$-type group. $H$-type groups forms an important class of stratified groups (also known as Carnot groups) \cite[Remark 18.1.7]{BLU}. 
We also note that the Lebesgue measure $dXdZ$ is a Haar measure on $N$ and we denote it by $m$.
The group $A=(0,\infty)$ acts on an $H$-type group $N$ by nonisotropic dilation:
\begin{equation}\label{nonisotropic}
\delta_a(n)=\delta_a(X,Z)=(\sqrt{a}X,aZ),\:\:a\in A,\:n=(X,Z)\in N.
\end{equation}  
A Harmonic $NA$ group $S$ is the semidirect product of a $H$-type group $N$ and $A$ under the above action. Thus, the multiplication on $S$ is given by
$$(X,Z,a)(X',Z',a')=(X+\sqrt{a}X',Z+aZ'+\frac{1}{2}\sqrt{a}[X,X'],aa').$$ 
Then $S$ is a solvable, connected and simply connected Lie group having Lie algebra $\mathfrak{s}=\mathfrak{n}\oplus\mathfrak{z}\oplus\R$ with Lie bracket 
$$[(X,Z,u),(X',Z',u')]=(\frac{1}{2}uX'-\frac{1}{2}u'X,uZ'-u'Z+[X,X'],0).$$
We shall write $(n,a)=(X,Z,a)$ for the element $\left(\exp(X+Z),a\right),\:a\in A,\:X\in\mathfrak{v},\:Z\in\mathfrak{z}.$ We note that for any $Z\in\mathfrak{z}$ with $\|Z\|=1$, $J_Z^2=-I_{\mathfrak{v}}$ and hence $\mathfrak{v}$ is even dimensional. We suppose that $dim\:\mathfrak{v}=2p,\:dim\:\mathfrak{z}=k$. Then $Q=p+k$ is called the homogeneous dimension of $N$. The importance of homogeneous dimension stems from the following relation 
\begin{equation}\label{measuredilation}
m\left(\delta_a(E)\right)=a^Qm(E),
\end{equation}
which holds for all measurable sets $E\subseteq N$ and $a\in A$. For convenience, we shall also use the notation $\rho=Q/2$. We denote by $e$ the identity element $(\underline{0},1)$ of $S$, where $\underline{0}$, $1$ are the identity elements of $N$ and $A$ respectively. We note that $\rho$ corresponds to the half-sum of positive roots when $S=G/K$, is a rank one symmetric space of noncompact type. The group $S$ is equipped with the left-invariant Riemannian metric induced by
$$\langle (X,Z,u),(X',Z',u')\rangle_S=\langle X,X'\rangle+\langle Z,Z'\rangle+uu'$$ 
on $\mathfrak{s}$. The associated left-invariant Haar measure $dx$ on $S$ is given by
$$dx=a^{-Q-1}dXdZda,$$
where $dX,\:dZ,\:da$ are the Lebesgue measures on $\mathfrak{v},\:\mathfrak{z},\:A$ respectively. Being a stratified Lie group, $N$ always admit homogeneous norms with respect to the family of dilations $\{\delta_a\mid a\in A\}$ \cite[P.8-10]{FS}. We recall that a continuous function $d:N\to[0,\infty)$ is said to be a homogeneous norm on $N$ with respect to the family of dilations $\{\delta_a\mid a\in A\}$ if $d$ satisfies the following \cite[P.8]{FS}:
\begin{enumerate}
	\item[i)]$d$ is smooth on $N\setminus\{0\}$;
	\item[ii)] $d(\delta_a(n))=rd(n)$,  for all  $a>0,\:n\in N$;
	\item [iii)]$d(n^{-1})=d(n)$, for all  $n\in N$;
	\item[iv)]$d(n)=0$  if and only if  $n=\underline{0}$.
\end{enumerate} 
It is known that \cite[Proposition 1.6]{FS} for any homogeneous norm $d$ on $N$, there exists a positive constant $\tau_d\geq1$, such that 
\begin{equation}\label{quasinorm}
d(nn_1)\leq \tau_d\left[d(n)+d(n_1)\right],\;\:\:\:n\in N, n_1\in N.
\end{equation}
A homogeneous norm $d$ defines a left invariant quasi-metric on $N$, denoted by $\bf{d}$, as follows:
\begin{equation*}
{\bf d}(n_1,n_2)=d(n_1^{-1}n_2),\:\:\:\: n_1\in N,\:n_2\in N.
\end{equation*}
In fact, one can easily verify the following from the definition of $d$ and from (\ref{quasinorm}) that
\begin{enumerate}
\item [i)] ${\bf d}(n_1,n_2)={\bf d}(n_2,n_1),\:\:\:\text{for all}\:\:n_1,\:n_2\in N$.
\item[ii)] ${\bf d}(nn_1,nn_2)={\bf d}(n_1,n_2),\:\:\:\text{for all}\:\:n_1,\:n_2,\:n\in N.$	
\item [iii)] For all $n_1,\:n_2,\:n\in N$
\begin{equation}\label{quasitriangle}
{\bf d}(n_1,n_2)\leq \tau_d\left[{\bf d}(n_1,n)+{\bf d}(n,n_2)\right].
\end{equation}
\end{enumerate}
From \cite[P.230]{BLU} it is known that any two homogeneous norms $d_1$ and $d_2$ on $N$ are equivalent in the sense that there exists a positive constant $B$ such that 
\begin{equation*}
B^{-1}d_1(n)\leq d_2(n)\leq Bd_2(n),\:\:\:\text{for all}\:\:n\in N.
\end{equation*}
We will work with the following homogeneous norm \cite[P.1918]{DK}:
\begin{equation}\label{cannonicalnorm}
d(n)=d(X,Z)=(\|X\|^4+16\|Z\|^2)^{\frac{1}{2}},\:\:\:\:\:n=(X,Z)\in N,
\end{equation} 
where $\|X\|$, $\|Z\|$ are usual Euclidean norms of $X\in\mathfrak{v}\cong\R^{2p}$ and $Z\in\mathfrak{z}\cong\R^k$ respectively. For the sake of simplicity, we write $\tau=\tau_d$. For $n\in N$ and $r>0$,  the $d$-ball centered at $n$ with radius $r$ is defined as
\begin{equation*}
B(n,r)=\{n_1\in N\mid{\bf d}(n,n_1)<r\}=\{n_1\in N\mid d(n_1^{-1}n)<r\}.
\end{equation*}
Then $\overline{B}(n,r)=\{n_1\in N\mid {\bf d}(n,n_1)\leq r \}$, which is a compact subset of $N$ \cite[Lemma 1.4]{FS}.
It follows that $B(n,r)$ is the left translate by $n$ of the ball $B(\underline{0},r)$ which in turn, is the image under $\delta_r$ of the ball $B(\underline{0},1)$. This shows, using (\ref{measuredilation}), that 
\begin{equation*}
m\left(B(n,r)\right)=m\left(B(\underline{0},r)\right)=m\left(B(\underline{0},1)\right)r^Q,\:\:\:\text{for all $n\in N$, $r>0$}.
\end{equation*}
We also observe that if $B=B(n,t)$ for some $n\in N$, $t>0$, then 
\begin{equation*}
\delta_r(B)=B(\delta_r(n),rt),\:\:\:\:\:\text{for all $r>0$}. 
\end{equation*}
We recall the following formula for integration (an analogue of polar coordinate) which can be used in order to determine the integrability of functions on $N$ (\cite[Proposition 1.15]{FS}): for all $g\in L^1(N)$,
\begin{equation}\label{polarcordinatena}
\int_Ng(n)\:dm(n)n=\int_{0}^{\infty}\int_{\Omega}g(\delta_r(\omega))r^{Q-1}\:d\sigma(\omega)\:dr,
\end{equation}
where $\Omega=\{\omega\in N\mid d(\omega)=1\}$ and $\sigma$ is a unique positive Radon measure on $\Omega$. For a function $\psi$ defined on $N$, we define for $a>0$,
\begin{equation}\label{dilationoffunction}
\psi_a(n)=a^{-Q}\psi\left(\delta_{{\frac{1}{a}}}(n)\right),\:\:\:\:\:n\in N.
\end{equation}
If $g$ is a measurable function on $N$ and $\mu$ is a measure on $N$, their convolution $\mu\ast g$ is defined by 
\begin{equation*}
\mu\ast g(n)=\int_{N}g(n_1^{-1}n)\:d\mu(n_1),
\end{equation*}
provided the integrals converges. When $d\mu=f\:dm$, we simply denote the above convolution by $f\ast g$. We refer to \cite[P.15-18]{FS} for basic properties of convolution on $N$.
\begin{remark}\label{approximateidentity}
\begin{enumerate}
\item [i)] It follows from (\ref{measuredilation}) that if $\psi\in L^1(N)$, then for all $a\in A$
\begin{equation*}
\int_{N}\psi_a(n)\:dm(n)=\int_{N}\psi(n)\:dm(n).  
\end{equation*} 
\item [ii)] Suppose that $\psi\in L^1(N)$ with $\int_{N}\psi(n)\:dm(n)=1$. Then $\{\psi_a\}_{a>0}$ is an approximate identity on $N$ \cite[Proposition 1.20]{FS}. In particular, for $f\in C_c(N)$, it follows that $f\ast\psi_a\to f$, as $a\to 0$, uniformly on $N$.
\end{enumerate}
\end{remark}
We now describe the Laplace-Beltrami operator on $S$. Let $\{e_i\mid 1\leq i\leq 2p\}$, $\{e_r\mid 2p+1\leq r\leq 2p+k\}$, $\{e_0\}$ be an orthonormal basis of $\mathfrak{s}$ corresponding to the decomposition $\mathfrak{s}=\mathfrak{n}\oplus\mathfrak{z}\oplus\R$. We denote by $E_{l}$ the left-invariant vector field on $S$ determined by $e_{l}$, $0\leq l\leq 2p+k$. Damek \cite[Theorem 2.1]{D1} (see also \cite[P.234]{DR}) showed that the Laplace-Beltrami operator $\mathcal{L}$ associated to the left-invariant metric $\langle,\rangle_S$ has the form
\begin{equation*}
\mathcal{L}=\sum_{l=0}^{2p+k}E_{l}^2-QE_0.
\end{equation*}
Let $\partial_i,\:\partial_r,\:\partial_a$ be the partial derivatives for the system of coordinates $(X_i,Z_r,a)$ corresponding to $(e_i,e_r,e_0)$. Applying the definition of vector fields: 
$$E_{l}f(X,Z,a)=\frac{d}{dt}f\left((X,Z,a)\exp(tE_{l})\right)|_{t=0},$$
one can show that
\begin{equation*}
E_0=a\partial_a,\:\:E_i=a\partial_i+\frac{a}{2}\sum_{r=2p+1}^{2p+k}\langle[X,e_i],e_r\rangle\partial_r,\:\:E_r=a^2\partial_r,\:\:1\leq i\leq 2p,\:\:2p+1\leq r\leq 2p+k.
\end{equation*}
Using these expressions $\mathcal{L}$ can be written as \cite[P.234]{DR}
\begin{equation}\label{laplacebeltramionna}
\mathcal{L}=a^2\partial_a^2+\mathcal{L}_a+(1-Q)a\partial_a,
\end{equation}
where \begin{equation}\label{la}
\mathcal{L}_a=a(a+\frac{1}{4}\|X\|^2)\sum_{r=2p+1}^{2p+k}\partial_r^2+a\sum_{i=1}^{2p}\partial_i^2+a^2\sum_{2p+1}^{2p+k}\sum_{i=1}^{2p}\langle[X,e_i],e_r\rangle\partial_r\partial_i.
\end{equation}
The formula for the Poisson kernel $\mathcal{P}:S\times N\to(0,\infty)$, corresponding to $\mathcal{L}$ is given by \cite[Theorem 2.2]{D1} (see also \cite[P.409]{ACD}) 
\begin{equation*}
\mathcal{P}(x,n)=P_a(n_1^{-1}n),\:\:\:\:\:x=(n_1,a)\in S,\:\:n\in N,
\end{equation*}
where $P$ is the function on $N$ defined by
\begin{equation}\label{poissonone}
P(n)=P(X,Z)=\frac{c_{p,k}}{\left(\left(1+\frac{\|X\|^2}{4}\right)^2+\|Z\|^2\right)^Q},\:\:\:\:n=(X,Z)\in N,
\end{equation}
and $c_{p,k}$ is a positive constant so that $\|P\|_{L^1(N)}=1$. Using the notion of dilation of a function (\ref{dilationoffunction}) we get that
\begin{equation}\label{poissonna}
P_a(n)=P_a(X,Z)=\frac{c_{p,k}\:a^Q}{\left(\left(a+\frac{\|X\|^2}{4}\right)^2+\|Z\|^2\right)^Q},\:\:\:\:n=(X,Z)\in N,\:\:a>0.
\end{equation} 
Remark \ref{approximateidentity} implies that $\{P_a\mid a\in A\}$, is an approximate identity on $N$. Expanding the square involving $\|X\|$ and $a$ in the denominator of the right-hand side of (\ref{poissonna}), and then making use of the expression  (\ref{cannonicalnorm}) of $d$, we obtain the following alternative formula of the Poisson kernel.
\begin{equation}\label{alternativepoisson}
P_a(n)=P_a(X,Z)=\frac{16^Qc_{p,k}\:a^Q}{\left(16a^2+8a\|X\|^2+d(X,Z)^2\right)^Q},\:\:\:\:n=(X,Z)\in N,\:\:a>0.
\end{equation}
\begin{remark}\label{propertiesofpoisson}
We list down the following properties of the function $P_a$ which can be derived from (\ref{alternativepoisson}) and (\ref{polarcordinatena}).
\begin{enumerate}
	\item[i)] $P_a(n)=P_a(n^{-1})$,\:\:for all\:\:$n\in N,\:a>0$.
	\item[ii)] For $1\leq r\leq\infty,\:P_a\in L^r(N)$,\:\:for all\:\:$a>0$.
\end{enumerate}
\end{remark}
It turns out that more general eigenfunctions of $\mathcal{L}$ can be obtained by considering the complex power of the Poisson kernel. For $\lambda\in\C$, the $\lambda$-Poisson kernel is defined as
\begin{equation}\label{lambdapoisson}
\mathcal{P}_{\lambda}(x,n)=\left[\frac{\mathcal{P}(x,n)}{P(\underline{0})}\right]^{\frac{1}{2}-\frac{i\lambda}{Q}}=\left[\frac{P_a(n_1^{-1}n)}{P(\underline{0})}\right]^{\frac{1}{2}-\frac{i\lambda}{Q}},\:\:\:\:x=(n_1,a)\in S,\:\:n\in N.
\end{equation}
We note from the expression of the function $P$ given in (\ref{poissonone}) that $P(\underline{0})=c_{p,k}$. 

It is well-known that for $\lambda\in\C$, the function $\mathcal{P}_{\lambda}(.,n)$ is an eigenfunction of $\mathcal{L}$ with eigenvalue $-(\lambda^2+\rho^2)$, for each fixed $n\in N$, \cite[P.654]{ADY}. We note from above that $\mathcal{P}_{i\rho}=\mathcal{P}$ and hence  $\{\mathcal{P}_{i\rho}((\underline{0},a),\cdot)\mid a\in A\}=\{P_a\mid a\in A\}$, is an approximate identity on $N$. 
We observe from from the expression of $P_a$ given in (\ref{alternativepoisson}) and polar form of the integration on $N$ (see \ref{polarcordinatena}) that if  $\text{Im}(\lambda)\in (0,\infty)$ then $\mathcal{P}_{\lambda}(x,.)\in L^r(N)$, $r\in [1,\infty]$, for each $x\in S$.
We also have the following important formula \cite[Lemma 2.3]{PK},
\begin{equation}\label{intplamda}
\int_{N}\mathcal{P}_{\lambda}(e,n)\:dm(n)=\int_{N}\left[\frac{P(n)}{P(\underline{0})}\right]^{\frac{1}{2}-\frac{i\lambda}{Q}}\:dm(n)=\frac{{\bf c}(-\lambda)}{c_{p,k}},\:\:\:\text{Im}(\lambda)>0,
\end{equation}
where $\bf c (\lambda)$ generalizes Harish-Chandra $\bf c$-function and is given by
\begin{equation*}
{\bf c}(\lambda)=\frac{2^{Q-2i\lambda}\Gamma(2i\lambda)\Gamma\left(\frac{2p+k+1}{2}\right)}{\Gamma\left(\frac{Q}{2}+i\lambda\right)\Gamma\left(\frac{p+1}{2}+i\lambda\right)},\:\:\:\text{Im}(\lambda)<0.
\end{equation*}
From the above formula it follows that the $\bf c$-function has no pole or zero in $\{\lambda\in \C:\text{Im}(\lambda)<0\}$. Therefore, using (\ref{intplamda}), we can normalize $\mathcal P_{\lambda}$, for $\text{Im}(\lambda)>0$, to define
\begin{equation}\label{normalisedpoisson}
\tilde{\mathcal{P}_{\lambda}}(x,n)=C_{\lambda}\mathcal{P}_{\lambda}(x,n),\:\:\:x\in S,\:n\in N,
\end{equation}
where $C_{\lambda}=c_{p,k}{\bf c}(-\lambda)^{-1}$. For $\text{Im}(\lambda)>0$, the $\lambda$-Poisson transform of a measure $\mu$ on $N$ is defined by
\begin{equation}\label{plambdamu}
\mathcal{P}_{\lambda}[\mu](n,a)=\int_{N}\tilde{\mathcal{P}_{\lambda}}((n,a),n')\:d\mu(n'),
\end{equation}
whenever the integral converges absolutely for every $(n,a)\in S$. In this case, we say that the $\lambda$-Poisson transform $\mathcal{P}_{\lambda}[\mu]$ of $\mu$ is well-defined. If $d\mu=fdm$ for some $f\in L^r(N)$, where $r\in[1,\infty]$, then $\mathcal{P}_{\lambda}[\mu]$ is well-defined and we denote it by $\mathcal{P}_{\lambda}f$. Since for each $n\in N$, $\tilde{\mathcal{P}_{\lambda}}(.,n)$ is an eigenfunction of $\mathcal{L}$ with eigenvalue $-(\lambda^2+\rho^2)$, it follows that $\mathcal{P}_{\lambda}[\mu]$ is also an eigenfunction of $\mathcal{L}$ with the same eigenvalue provided $\mathcal{P}_{\lambda}[\mu]$ is well-defined. Using the definition of $\mathcal{P}_{\lambda}$ given in (\ref{lambdapoisson}) and the relation (\ref{normalisedpoisson}), we make the following important observation for $\text{Im}(\lambda)>0$, $x=(n_1,a)\in S$ and $n\in N$.
\begin{eqnarray}\label{relationbetweenpandq}
\tilde{\mathcal{P}_{\lambda}}(x,n)&=&C_{\lambda}\mathcal{P}_{\lambda}(x,n)\nonumber\\&=&C_{\lambda}\left[P(\underline{0})^{-1}P_a(n_1^{-1}n)\right]^{\frac{1}{2}-\frac{i\lambda}{Q}}\nonumber
\\&=&C_{\lambda}\left[P(\underline{0})^{-1}a^{-Q}P\left(\delta_{a^{-1}}(n_1^{-1}n)\right)\right]^{\frac{1}{2}-\frac{i\lambda}{Q}}\nonumber\\
&=&C_{\lambda}a^{\frac{Q}{2}+i\lambda}a^{-Q}\left[P(\underline{0})^{-1}P\left(\delta_{a^{-1}}(n_1^{-1}n)\right)\right]^{\frac{1}{2}-\frac{i\lambda}{Q}}\nonumber\\
&=& a^{\frac{Q}{2}+i\lambda}a^{-Q}q^{\lambda}\left(\delta_{a^{-1}}(n_1^{-1}n)\right)\nonumber\\
&=& a^{\frac{Q}{2}+i\lambda}q^{\lambda}_a(n_1^{-1}n),
\end{eqnarray}
where
\begin{equation}\label{qlambda}
q^{\lambda}(n):=C_{\lambda}\left[P(\underline{0})^{-1}P(n)\right]^{\frac{1}{2}-\frac{i\lambda}{Q}},\:\:\: q^{\lambda}_a(n)=a^{-Q}q^{\lambda}\left(\delta_{a^{-1}}(n)\right),
\end{equation}
$\text{Im}(\lambda)>0$, $n\in N$ and $a\in A$. We will need more explicit expression of the function $q_a^{\lambda}$, $\text{Im}(\lambda)>0$, which can be obtained from the expression (\ref{alternativepoisson}) of the function $P_a$
\begin{equation}\label{qbetaexpression}
q^{\lambda}_a(n)=c_{\lambda}\:\frac{a^{-2i\lambda}}{\left(16a^2+8a\|X\|^2+d(n)^2\right)^{\frac{Q}{2}-i\lambda}},\:\:\:n=(X,Z)\in N,\:a\in A,
\end{equation}
where $c_{\lambda}=16^{\rho-i\lambda}c_{p,k}{\bf c}(-\lambda )^{-1}$.
It is clear from the definition of $q^{\lambda}$ that if $\text{Im}(\lambda)>0$, then $q^{\lambda}\in L^r(N)$, for all $r\in[1,\infty]$ (as $P$ is so), and that 
\begin{equation*}
\int_{N}q^{\lambda}(n)\:dm(n)=1. 
\end{equation*}
It thus follow that $\{q^{\lambda}_a\mid a\in A\}$, is an approximate identity on $N$. For a measure $\mu$ on $N$ and $\text{Im}(\lambda)>0$, we define the convolution integral
\begin{equation}\label{qlamconvmu}
\mathcal{Q}_{\lambda}[\mu](n,a):=\mu\ast q^{\lambda}_a(n)=a^{-Q}\int_{N}q^{\lambda}\left(\delta_{a^{-1}}(n_1^{-1}n)\right)\:d\mu(n_1),
\end{equation}
whenever the integral converges absolutely for every $(n,a)\in S$ and say that $\mathcal{Q}_{\lambda}[\mu]$ is well-defined. If $d\mu=fdm$ for some $f\in L^r(N)$, where $r\in[1,\infty]$, then $\mathcal{Q}_{\lambda}[\mu]$ is well-defined and we denote it by $\mathcal{Q}_{\lambda}f$. From the definition of the $\lambda$-Poisson integral (\ref{plambdamu}) and (\ref{relationbetweenpandq}), it follows that for a measure $\mu$ with well-defined $\mathcal{P}_{\lambda}[\mu]$ with $\text{Im}(\lambda)>0$,
\begin{equation}\label{plamdaandqlamda}
\mathcal{P}_{\lambda}[\mu](n,a)=a^{\frac{Q}{2}+i\lambda}\mathcal{Q}_{\lambda}[\mu](n,a),\:\:\:\text{for all}\:\:(n,a)\in S.
\end{equation}
\begin{remark}\label{qlambdaf}
Since $\{q^{\lambda}_a\mid a\in A\}$, is an approximate identity on $N$, it follows from (\ref{plamdaandqlamda}) that for $f\in C_c(N)$, $\text{Im}(\lambda)>0$,
\begin{equation*}
\lim_{a\to 0}\frac{\mathcal{P}_{\lambda}f(n,a)}{a^{\frac{Q}{2}+i\lambda}}=\lim_{a\to 0}\mathcal{Q}_{\lambda}f(n,a)=f(n),
\end{equation*}uniformly for $n\in N$.  
\end{remark}
In this paper we will be interested only in the case $\lambda=i\beta$, for $\beta>0$. Using the formula of $P_a$ given in (\ref{alternativepoisson}) we can explicitly write down the expression of  $\mathcal{P}_{i\beta}[\mu]$ for a suitable measure $\mu$. In this regard, we fix some $\beta>0$, and denote by $M_{\beta}$ the set of all measures $\mu$ on $N$ such that $\mathcal{P}_{i\beta}[\mu]$ (equivalently $\mathcal{Q}_{i\beta}[\mu]$) is well-defined. If $\mu\in M_{\beta}$ then for all $n=(X,Z)\in N$, $a\in A$,
\begin{eqnarray}
\mathcal{Q}_{i\beta}[\mu](n,a)&=&C_{i\beta}\:a^{-Q}\int_{N}\left[P(\underline{0})^{-1}P_1\left(\delta_{a^{-1}}(n_1^{-1}n)\right)\right]^{\frac{1}{2}+\frac{\beta}{Q}}\:d\mu(n_1)\nonumber\\
&=&C_{i\beta}\:a^{-Q}\int_{N}\frac{16^{Q\left(\frac{1}{2}+\frac{\beta}{Q}\right)}}{\left(16+8\frac{\|X-X_1\|^2}{a}+\frac{d\left((X_1,Z_1)^{-1}(X,Z)\right)^2}{a^2}\right)^{Q\left(\frac{1}{2}+\frac{\beta}{Q}\right)}}\:d\mu(X_1,Z_1)\nonumber\\
&=&c_{\beta}\:a^{2\beta}\int_{N}\frac{1}{\left(16a^2+8a\|X-X_1\|^2+d\left((X_1,Z_1)^{-1}(X,Z)\right)^2\right)^{\rho+\beta}}\:d\mu(X_1,Z_1)\label{explicitqbetamu},
\end{eqnarray}
where 
\begin{equation*}
c_{\beta}=16^{\rho+\beta}C_{i\beta}=16^{\rho+\beta}c_{p,k}{\bf c}(-i\beta)^{-1}>0. 
\end{equation*}
The following elementary lemma gives a lower bound for the Poisson kernel $P_a(n)$, for large values of $d(n)$.  
\begin{lemma}\label{ratioest}
If $R\geq 1$, then for each $a>0$, there exists a constant $C_a>0$, such that
\begin{equation*}
\frac{1}{16a^2+d(n)^2+8a\|X\|^2}\geq \frac{C_a}{{16a^2+\frac{d(n)^2}{4\tau^2}}},\:\:\:\:\text{for all}\:\:\:n=(X,Z)\in  B(\underline{0},2R)^c.
\end{equation*}
\end{lemma}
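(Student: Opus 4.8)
The plan is to reduce the two-variable bound to a single-variable comparison in $t:=d(n)$, exploiting that after one harmless estimate the left-hand denominator becomes a perfect square. Writing $n=(X,Z)$, I first record the two facts that drive everything. Since $R\geq 1$, every $n\in B(\underline 0,2R)^c$ satisfies $d(n)\geq 2R\geq 2$. Moreover, the explicit homogeneous norm in (\ref{cannonicalnorm}) gives $d(n)^2=\|X\|^4+16\|Z\|^2\geq \|X\|^4$, hence $\|X\|^2\leq d(n)$; this is the only place where the particular form of $d$ enters.

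Next I would bound the left-hand denominator from above. From $\|X\|^2\leq d(n)$ we get $8a\|X\|^2\leq 8a\,d(n)$, and therefore
\begin{equation*}
16a^2+d(n)^2+8a\|X\|^2\leq 16a^2+8a\,d(n)+d(n)^2=(4a+d(n))^2 .
\end{equation*}
Consequently the left-hand side of the asserted inequality is at least $(4a+d(n))^{-2}$, and it suffices to prove
\begin{equation*}
\frac{16a^2+\frac{d(n)^2}{4\tau^2}}{(4a+d(n))^2}\geq C_a .
\end{equation*}

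To finish, drop the nonnegative term $16a^2$ in the numerator and write the remaining quotient as $\frac{1}{4\tau^2}\bigl(\tfrac{t}{4a+t}\bigr)^2$ with $t=d(n)\geq 2R$. Since $t\mapsto t/(4a+t)$ is increasing, its value on $[2R,\infty)$ is at least $\frac{2R}{4a+2R}=\frac{1}{1+2a/R}\geq\frac{1}{1+2a}$, the last step using $R\geq 1$. Hence the quotient is bounded below by $C_a:=\frac{1}{4\tau^2(1+2a)^2}$, which depends only on $a$ (with $\tau$ fixed), as required. There is no real obstacle here: the argument is elementary once one notices the perfect-square identity $16a^2+8at+t^2=(4a+t)^2$ and the norm inequality $\|X\|^2\leq d(n)$; the only point needing the hypothesis $R\geq 1$ is passing from the $R$-dependent lower bound $\frac{1}{1+2a/R}$ to one depending on $a$ alone.
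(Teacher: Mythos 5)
Your proof is correct and takes essentially the same route as the paper's: both estimate the quotient of the two denominators using only $d(n)\geq 2R$ and the norm inequality $d(n)^2\geq\|X\|^4$ (i.e.\ $\|X\|^2\leq d(n)$), arriving at a constant of the same shape, $C_a\sim\bigl(4\tau^2(1+2a)^2\bigr)^{-1}$ versus the paper's $\bigl(4\tau^2(4a^2+8a+1)\bigr)^{-1}$. Your perfect-square observation $16a^2+8a\,d(n)+d(n)^2=(4a+d(n))^2$ simply replaces the paper's case split on whether $\|X\|>1$.
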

\begin{proof}
We take $a>0$, and consider the quotient
\begin{equation*}
T(n)=\frac{16a^2+d(n)^2+8a\|X\|^2}{16a^2+\frac{d(n)^2}{4\tau^2}},\:\:\:n\in N.
\end{equation*}
For $n=(X,Z)\in N$ with $d(n)>2R$ we have  
\begin{equation*}
d(n)^2\geq\|X\|^4\geq\|X\|^2,\:\:\:\:\:\:\:\:\text{if $\|X\|>1$},
\end{equation*}
and hence
\begin{equation*}
T(n)\leq 4\tau^2\left(\frac{16a^2}{d(n)^2}+1+8a\frac{\|X\|^2}{d(n)^2}\right)\leq 4\tau^2\left(\frac{16a^2}{4R^2}+1+8a\right).
\end{equation*}
On the other hand, if $\|X\|\leq 1$, then 
\begin{equation*}
T(n)\leq 4\tau^2\left(\frac{16a^2}{d(n)^2}+1+8a\frac{\|X\|^2}{d(n)^2}\right)\leq 4\tau^2\left(\frac{4a^2}{R^2}+1+\frac{8a}{4R^2}\right).
\end{equation*}
As $R\geq 1$, combinig both the inequalities we get that for all $n=(X,Z)\in B(\underline{0},2R)^c $
\begin{equation}\label{ratioineq}
T(n)\leq 4\tau^2(4a^2+1+8a).
\end{equation}
The result follows by denoting $C_a=(4\tau^2(4a^2+1+8a))^{-1}$. 
\end{proof}
\begin{lemma}\label{integralfinite}
Let $\beta>0$ and $\mu$ be a positive measure on $N$ such that $\mu\ast q^{i\beta}_a(\underline{0})$ is finite for some $a\in A$. Then 
\begin{equation*}
\int_N\left(16a^2+\frac{d(n)^2}{4\tau^2}\right)^{-\beta-\rho}\:d\mu(n)<\infty.
\end{equation*}	
\end{lemma}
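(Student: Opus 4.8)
The plan is to convert the hypothesis into an integrability statement about the explicit kernel and then transfer integrability from that kernel to the target integrand using the pointwise comparison supplied by Lemma \ref{ratioest}. First I would fix an $a\in A$ for which $\mu\ast q^{i\beta}_a(\underline{0})$ is finite and unwind the convolution at the identity: by the definition of convolution on $N$ we have $\mu\ast q^{i\beta}_a(\underline{0})=\int_N q^{i\beta}_a(n_1^{-1})\,d\mu(n_1)$. Since $n^{-1}=(-X,-Z)$ for $n=(X,Z)$, and both $\|{-X}\|=\|X\|$ and $d(n^{-1})=d(n)$ hold, the explicit formula (\ref{qbetaexpression}) with $\lambda=i\beta$ (so the denominator exponent is $\rho+\beta$ and the numerator is $a^{2\beta}$) shows that $q^{i\beta}_a(n^{-1})=q^{i\beta}_a(n)$. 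Hence, up to the positive constant $c_{i\beta}a^{2\beta}$, the hypothesis is exactly the assertion that
\begin{equation*}
\int_N\frac{d\mu(n)}{\left(16a^2+8a\|X\|^2+d(n)^2\right)^{\rho+\beta}}<\infty.
\end{equation*}

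Next I would split $N$ into the $d$-ball $B(\underline{0},2)$ and its complement $B(\underline{0},2)^c$ and bound the desired integral on each piece separately. On the complement I apply Lemma \ref{ratioest} with $R=1$; raising its inequality to the positive power $\rho+\beta$ gives
\begin{equation*}
\left(16a^2+\frac{d(n)^2}{4\tau^2}\right)^{-\beta-\rho}\le C_a^{-(\rho+\beta)}\left(16a^2+8a\|X\|^2+d(n)^2\right)^{-\rho-\beta}
\end{equation*}
for all $n\in B(\underline{0},2)^c$. Integrating against $d\mu$ over $B(\underline{0},2)^c$ and invoking the displayed consequence of the hypothesis controls this part by a finite quantity.

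For the bounded piece $B(\underline{0},2)$ the integrand $(16a^2+\frac{d(n)^2}{4\tau^2})^{-\beta-\rho}$ is at most the constant $(16a^2)^{-\beta-\rho}$, so the integral over the ball is dominated by $(16a^2)^{-\beta-\rho}\mu(B(\underline{0},2))$. Since $\overline{B}(\underline{0},2)$ is compact and $\mu$ is a positive measure whose total variation is locally finite, $\mu(B(\underline{0},2))<\infty$, and this piece is finite as well. Adding the two estimates yields the conclusion.

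I would expect the only genuinely delicate point to be the identification in the first paragraph, namely checking that $q^{i\beta}_a(n^{-1})=q^{i\beta}_a(n)$ so that the hypothesis evaluated at $\underline{0}$ becomes the clean integrability statement above; after that the argument is just the comparison of Lemma \ref{ratioest} raised to the power $\rho+\beta$ on the exterior region together with local finiteness of $\mu$ on the interior ball.
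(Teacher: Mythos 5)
Your proof is correct and takes essentially the same route as the paper: split off the ball $B(\underline{0},2)$ (where local finiteness of $\mu$ handles everything) and on the complement apply Lemma \ref{ratioest} with $R=1$, raised to the power $\rho+\beta$, to compare the target integrand with the explicit kernel $q^{i\beta}_a$ whose $\mu$-integral is finite by hypothesis. The only difference is that you make explicit the symmetry $q^{i\beta}_a(n^{-1})=q^{i\beta}_a(n)$ needed to identify $\mu\ast q^{i\beta}_a(\underline{0})$ with the integral of the kernel, a point the paper leaves implicit.
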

\begin{proof}
Since the integrand is a continuous function on $N$, it suffices to show that 
\begin{equation*}
\int_{B(\underline{0},2)^c}\left(16a^2+\frac{d(n)^2}{4\tau^2}\right)^{-\beta-\rho}\:d\mu(n)<\infty.
\end{equation*}
Using Lemma \ref{ratioest} and the explicit expression of $q_a^{\lambda}$, for $\lambda=i\beta$ (see \ref{qbetaexpression})), we obtain
\begin{eqnarray*}
&&a^{2\beta}\int_{B(\underline{0},2)^c}\left(16a^2+\frac{d(n)^2}{4\tau^2}\right)^{-\beta-\rho}\:d\mu(n)\\
&\leq& C_{a,\beta}\:a^{2\beta}\int_{B(\underline{0},2)^c}\left(16a^2+d(n)^2+8a\|X\|^2\right)^{-\beta-\rho}\:d\mu(n)\\
&=& C_{a,\beta}^{\prime}\:\mu\ast q^{i\beta}_a(0)<\infty. 
	\end{eqnarray*}
This completes the proof.
\end{proof}
We have observed in Remark \ref{qlambdaf} that $\mathcal{Q}_{i\beta}f(\cdot, a)\to f$, as $a\to 0$, uniformly on $N$, whenever $f\in C_c(N)$ and $\beta>0$. However, a stronger result is true. 
\begin{lemma}\label{unifna}
If $\beta>0$ and $f\in C_c(N)$, then
\begin{equation*}
\frac{\mathcal{Q}_{i\beta}f(\cdot,a)}{q^{i\beta}}\to \frac{f}{q^{i\beta}},
\end{equation*}
uniformly on $N$ as $a\to 0$.
\end{lemma}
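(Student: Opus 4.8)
The plan is to bootstrap from the uniform convergence $\mathcal{Q}_{i\beta}f(\cdot,a)\to f$ already recorded in Remark \ref{qlambdaf}, upgrading it to the weighted statement by splitting $N$ into a large ball and its complement. The whole difficulty comes from the weight $1/q^{i\beta}$: since $q^{i\beta}$ decays like $d(n)^{-2(\rho+\beta)}$ at infinity, dividing by it magnifies the tail of $\mathcal{Q}_{i\beta}f(\cdot,a)$, so bare uniform convergence of $\mathcal{Q}_{i\beta}f(\cdot,a)$ to $f$ is not by itself enough.

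First I would fix $R_0>0$ with $\mathrm{supp}\,f\subseteq B(\underline{0},R_0)$ and set $R=\max\{2\tau R_0,1\}$. On the compact set $\overline{B}(\underline{0},R)$ the function $q^{i\beta}$ is continuous and strictly positive (recall $c_{i\beta}>0$), hence bounded below by some $\varepsilon>0$; therefore on this region
$$\left|\frac{\mathcal{Q}_{i\beta}f(n,a)}{q^{i\beta}(n)}-\frac{f(n)}{q^{i\beta}(n)}\right|\le \varepsilon^{-1}\,\sup_{N}\big|\mathcal{Q}_{i\beta}f(\cdot,a)-f\big|,$$
which tends to $0$ as $a\to 0$ by Remark \ref{qlambdaf}.

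On the complement $B(\underline{0},R)^c$ we have $f\equiv 0$, so $f/q^{i\beta}=0$ there and it remains to prove $\mathcal{Q}_{i\beta}f(n,a)/q^{i\beta}(n)\to 0$ uniformly. Writing $\mathcal{Q}_{i\beta}f(n,a)=\int_{\mathrm{supp}\,f}q^{i\beta}_a(n_1^{-1}n)f(n_1)\,dm(n_1)$, the heart of the matter is the pointwise ratio estimate
$$\frac{q^{i\beta}_a(n_1^{-1}n)}{q^{i\beta}(n)}\le C\,a^{2\beta},\qquad n_1\in\mathrm{supp}\,f,\ n\in B(\underline{0},R)^c,$$
with $C$ independent of $a$ and $n$. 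Granting this, $\big|\mathcal{Q}_{i\beta}f(n,a)/q^{i\beta}(n)\big|\le C\,a^{2\beta}\,\|f\|_{L^1(N)}\to 0$ uniformly as $a\to 0$, and combining the two regions yields the claim.

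The main obstacle is establishing this ratio estimate, where the quasi-metric geometry enters. Using the explicit formula (\ref{qbetaexpression}),
$$\frac{q^{i\beta}_a(n_1^{-1}n)}{q^{i\beta}(n)}=a^{2\beta}\,\frac{\big(16+8\|X\|^2+d(n)^2\big)^{\rho+\beta}}{\big(16a^2+8a\|X-X_1\|^2+d(n_1^{-1}n)^2\big)^{\rho+\beta}}.$$
For the numerator I would use $d(n)^2=\|X\|^4+16\|Z\|^2$ to bound $16+8\|X\|^2+d(n)^2\le C(1+d(n)^2)\le C'd(n)^2$, the last step valid since $d(n)\ge R\ge 1$. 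For the denominator I would drop the nonnegative terms $16a^2+8a\|X-X_1\|^2$ and apply the quasi-triangle inequality (\ref{quasitriangle}): as $d(n_1)<R_0\le R/(2\tau)\le d(n)/(2\tau)$, we get $d(n_1^{-1}n)\ge d(n)/\tau-d(n_1)\ge d(n)/(2\tau)$, so the denominator is $\ge\big(d(n)^2/4\tau^2\big)^{\rho+\beta}$. The two factors of $d(n)^{2(\rho+\beta)}$ then cancel, leaving the $a$-independent constant $C$; this comparison is precisely in the spirit of Lemma \ref{ratioest}. Assembling the estimates on $\overline{B}(\underline{0},R)$ and $B(\underline{0},R)^c$ gives uniform convergence on all of $N$.
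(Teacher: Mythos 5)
Your proof is correct and follows essentially the same route as the paper: handle the compact set $\overline{B}(\underline{0},R)$ via Remark \ref{qlambdaf} together with the positive lower bound for $q^{i\beta}$ there, and on the complement use the reverse quasi-triangle inequality $d(n_1^{-1}n)\ge d(n)/(2\tau)$ to bound the kernel ratio by $Ca^{2\beta}$, uniformly in $n$ and $n_1$. The only cosmetic difference is that the paper routes the final numerator/denominator comparison through Lemma \ref{ratioest} with $a=1$, whereas you estimate $16+8\|X\|^2+d(n)^2\le C\,d(n)^2$ directly; both give the same constant up to the choice of $C$.
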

\begin{proof}
We assume that $\text{supp}\:f\subset B(\underline{0},R)$ for some $R>1$. 
Since $q^{i\beta}$ is a strictly positive continuous function on $N$, it follows that $\frac{1}{q^{i\beta}}$ is bounded in $\overline{B(\underline{0},2\tau R)}$. In view of Remark \ref{qlambdaf}, it suffices to prove that  
\begin{equation*}
\frac{\mathcal{Q}_{i\beta}f(n,a)}{q^{i\beta}(n)}\to 0,
\end{equation*}
uniformly for $n\in B(\underline{0},2\tau R)^c$, as $a\to 0$.
From the quasi-triangle inequality (\ref{quasinorm}) for $d$, we have 
\begin{equation}\label{reversetriangle}
d(n_1^{-1}n)\geq \frac{d(n)}{\tau}-d(n_1),\:\:\:\text{for all $n\in N$, $n_1\in N$}.
\end{equation}
For+ $n\in B(\underline{0},2\tau R)^c$ and $n_1\in B(\underline{0},R)$
\begin{equation*}
d(n_1)<R\leq\frac{d(n)}{2\tau}
\end{equation*}
Hence, for $n\in B(\underline{0},2\tau R)^c$ and $n_1\in B(\underline{0},R)$,
\begin{equation}\label{normcomp}
d(n_1^{-1}n)\geq \frac{d(n)}{\tau}-\frac{d(n)}{2\tau}=\frac{d(n)}{2\tau}.
\end{equation}
From the expression of $\mathcal Q_{i\beta}f$ (see \ref{explicitqbetamu}) and the inequality (\ref{normcomp}) above, it follows that for $n=(X,Z)\in B(\underline{0},2\tau R)^c$, $a>0$,
\begin{eqnarray*}
\left|\mathcal{Q}_{i\beta}f(n,a)\right|&\leq & c_{\beta}\:a^{2\beta}\int_{B(\underline{0},R)}\frac{|f(X_1,Z_1)|}{\left(16a^2+8a\|X-X_1\|^2+d\left((X_1,Z_1)^{-1}(X,Z)\right)^2\right)^{\rho+\beta}}\:dX_1dZ_1\nonumber\\
&\leq&c_{\beta}\:a^{2\beta}\int_{B(\underline{0},R)}\frac{|f(X_1,Z_1)|}{\left(16a^2+8a\|X-X_1\|^2+\frac{d(X,Z)^2}{4\tau^2}\right)^{\beta+\rho}}\:dX_1dZ_1\\
&\leq&c_{\beta}\:a^{2\beta}\int_{B(\underline{0},R)}\frac{|f(X_1,Z_1)|}{\left(16a^2+\frac{d(X,Z)^2}{4\tau^2}\right)^{\beta+\rho}}\:dX_1dZ_1\nonumber\\
\end{eqnarray*}
Using the expression of $q^{i\beta}$ given in (\ref{qbetaexpression}) and Lemma \ref{ratioest} for $a=1$ (as $\tau\geq 1$), it follows from the inequality above that for all $n=(X,Z)\in B(\underline{0},2\tau R)^c$, 
\begin{eqnarray*}
\frac{|\mathcal{Q}_{i\beta}f(n, a)|}{q^{i\beta}(n)}&\leq& c_{\beta}\:a^{2\beta}\left(\frac{16+8\|X\|^2+d(n)^2}{16a^2+\frac{d(n)^2}{4\tau^2}}\right)^{\beta+\rho}\int_{B(\underline{0},R)}|f(n_1)|\:dm(n_1)\\ 
&\leq&c_{\beta,\tau}\:a^{2\beta}\left(\frac{16+\frac{d(n)^2}{4\tau^2}}{16a^2+\frac{d(n)^2}{4\tau^2}}\right)^{\beta+\rho}\int_{B(\underline{0},R)}|f(n_1)|\:dm(n_1)\\
&\leq&c_{\beta,\tau}\:a^{2\beta}\left(\frac{64\tau^2}{d(n)^2}+1\right)^{\beta+\rho}\|f\|_{L^1(N)}\\
&\leq& C_{R}\:a^{2\beta}\|f\|_{L^1(N)},
\end{eqnarray*} 
as $d(n)^2>2\tau R$. Letting $a\to 0$ in the last inequality, we complete the proof.
\end{proof}
Recall that for $\beta>0$, $M_{\beta}$ denotes the set of all measures $\mu$ on $N$ such that $\mathcal{Q}_{i\beta}[\mu]$ is well-defined (see the paragraph after Remark \ref{qlambdaf}). It is clear that $L^r(N)\subset M_{\beta}$, for all $r\in[1,\infty]$. We also note that if $|\mu|(N)$ is finite, then $\mu\in M_{\beta}$. In particular, every complex measure $\mu$ on $N$ belongs to $M_{\beta}$. We have the following observation regarding this class of measures. 
\begin{lemma}\label{fubinina}
Suppose $\beta>0$. If $\nu\in M_{\beta}$, and $f\in C_c(N)$, then for each fixed $a\in A$,
\begin{equation*}
\int_{N}\mathcal{Q}_{i\beta}f(n,a)\:d\nu(n)=\int_{N}\mathcal{Q}_{i\beta}[\nu](n,a)f(n)\:dm(n).
\end{equation*}
\end{lemma}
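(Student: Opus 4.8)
The plan is to read both sides of the asserted identity as the two iterated integrals of one and the same non-negative kernel, so that the equality is nothing but Fubini's theorem; the actual content lies in justifying the interchange of the order of integration. Unwinding the definitions in (\ref{qlamconvmu}), the left-hand side equals
\begin{equation*}
\int_N\left(\int_N q^{i\beta}_a(n_1^{-1}n)\,f(n_1)\,dm(n_1)\right)d\nu(n),
\end{equation*}
and the right-hand side equals
\begin{equation*}
\int_N\left(\int_N q^{i\beta}_a(n_1^{-1}n)\,d\nu(n_1)\right)f(n)\,dm(n).
\end{equation*}
I will use the symmetry $q^{i\beta}_a(m)=q^{i\beta}_a(m^{-1})$, which is immediate from (\ref{qbetaexpression}) once one notes that $m^{-1}=(-X,-Z)$ for $m=(X,Z)$, so that both $\|X\|$ and $d(m)$ are unchanged under inversion. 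Granting the interchange, the inner integral of the first display becomes $\int_N q^{i\beta}_a(n^{-1}n_1)\,d\nu(n)=\mathcal{Q}_{i\beta}[\nu](n_1,a)$, and the two displays coincide, which is exactly the claim.

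To legitimise Fubini I would check absolute convergence of the double integral, namely that
\begin{equation*}
\int_N\int_N q^{i\beta}_a(n_1^{-1}n)\,|f(n_1)|\,dm(n_1)\,d|\nu|(n)<\infty.
\end{equation*}
Here $q^{i\beta}_a>0$ (its constant $c_{i\beta}$ is positive because ${\bf c}(-i\beta)>0$ for $\beta>0$), and $\nu\in M_\beta$ forces $|\nu|\in M_\beta$, since membership in $M_\beta$ is phrased in terms of absolute convergence and therefore depends only on the total variation. As the integrand is non-negative, Tonelli's theorem lets me integrate in $n$ first; writing $\mathrm{supp}\,f\subseteq B(\underline 0,R)$, it then suffices to bound
\begin{equation*}
G(n_1):=\int_N q^{i\beta}_a(n_1^{-1}n)\,d|\nu|(n)
\end{equation*}
uniformly for $n_1\in B(\underline 0,R)$, because the outer integral is then at most $\|f\|_\infty\,m(B(\underline 0,R))\,\sup_{n_1\in B(\underline 0,R)}G(n_1)$.

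For this uniform bound I would split the $n$-integration over $\overline{B(\underline 0,2\tau R)}$ and its complement. On the compact ball the kernel is dominated by the constant $c_{i\beta}a^{2\beta}(16a^2)^{-\rho-\beta}$ while $|\nu|$ is locally finite, so that part is at most a constant multiple of $|\nu|(\overline{B(\underline 0,2\tau R)})$, independently of $n_1$. On the complement, for $n_1\in B(\underline 0,R)$ the reverse quasi-triangle inequality gives (\ref{normcomp}), that is $d(n_1^{-1}n)\ge \frac{d(n)}{2\tau}$; discarding the non-negative term $8a\|\cdot\|^2$ in (\ref{qbetaexpression}) then yields
\begin{equation*}
q^{i\beta}_a(n_1^{-1}n)\le c_{i\beta}\,a^{2\beta}\left(16a^2+\frac{d(n)^2}{4\tau^2}\right)^{-\rho-\beta},
\end{equation*}
a bound with no dependence on $n_1$, whose integral against $d|\nu|(n)$ is finite by Lemma \ref{integralfinite} applied to $|\nu|$ (whose hypothesis $|\nu|\ast q^{i\beta}_a(\underline 0)<\infty$ holds because $|\nu|\in M_\beta$). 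Combining the two pieces gives the uniform bound on $G$ and thereby validates Fubini, after which the symmetry computation of the first paragraph finishes the proof. I expect this exterior estimate to be the only delicate step: it is precisely the point where the genuine decay of the kernel packaged in Lemma \ref{integralfinite}, and not merely the pointwise finiteness of $\mathcal{Q}_{i\beta}[|\nu|]$, is what is needed.
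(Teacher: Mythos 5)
Your proposal is correct and follows essentially the same route as the paper: justify Fubini by splitting the $n$-integration over $B(\underline{0},2\tau R)$ and its complement, bounding the kernel by $c\,a^{2\beta}(16a^2)^{-\rho-\beta}$ on the ball and by $c\,a^{2\beta}\bigl(16a^2+\tfrac{d(n)^2}{4\tau^2}\bigr)^{-\rho-\beta}$ off it via (\ref{normcomp}), then invoking Lemma \ref{integralfinite} for $|\nu|$. Your explicit use of the inversion symmetry $q^{i\beta}_a(m)=q^{i\beta}_a(m^{-1})$ to match the two iterated integrals is a detail the paper leaves implicit, but it is needed and correctly verified.
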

\begin{proof}
The result will follow by interchanging integrals using Fubini's theorem. In order to apply Fubini's theorem we have to show that 
\begin{equation*}
\int_{N}\int_{\text{supp}\:f}q^{i\beta}_a(n_1^{-1}n)|f(n_1)|\:dm(n_1)\:d|\nu|(n)<\infty.
\end{equation*}
We asuume that $\text{supp}\:f\subset B(\underline{0},R)$, for some $R>1$. Then for each fixed $a\in A$,
\begin{eqnarray*}
I&=&\int_{N}\int_{B(\underline{0},R)}q^{i\beta}_a(n_1^{-1}n)|f(n_1)|\:dm(n_1)\:d|\nu|(n)\\
&=&c_{\beta}\:a^{2\beta}\int_{B(\underline{0},2\tau R)}\int_{B(\underline{0},R)}\frac{|f(X_1,Z_1)|\:dm(X_1,Z_1)\:d|\nu|(n)}{\left(16a^2+8a\|X-X_1\|^2+d\left((X_1,Z_1)^{-1}(X,Z)\right)^2\right)^{\rho+\beta}}\\&&\:+c_{\beta}\:a^{2\beta}\int_{B(\underline{0},2\tau R)^c}\int_{B(\underline{0},R)}\frac{|f(X_1,Z_1)|\:dm(X_1,Z_1)\:d|\nu|(n)}{\left(16a^2+8a\|X-X_1\|^2+d\left((X_1,Z_1)^{-1}(X,Z)\right)^2\right)^{\rho+\beta}}\\
&\leq&c_{\beta}\:a^{2\beta}(16a^2)^{-\beta-\rho}|\nu|(B(\underline{0},2\tau R))\|f\|_{L^1(N)}\\&&
\:\:\:+c_{\beta}\:a^{2\beta}\int_{B(\underline{0},2\tau R)^c}\int_{B(\underline{0},R)}\frac{|f(X_1,Z_1)|}{\left(16a^2+8a\|X-X_1\|^2+\frac{d(X,Z)^2}{4\tau^2}\right)^{\beta+\rho}}\:dm(X_1,Z_1)\:d|\nu|(n)\\&&\:\:\:\:\:\:\:\:\:(\text{using (\ref{normcomp}) in the second integral})\\&\leq&c_{\beta}\:a^{2\beta}(16a^2)^{-\beta-\rho}|\nu|(B(\underline{0},2\tau R))\|f\|_{L^1(N)}\\&&\:\:\:+c_{\beta}\:a^{2\beta}\|f\|_{L^1(N)}\int_{ B(\underline{0},2\tau  R)^c}\left(16a^2+\frac{d(X,Z)^2}{4\tau^2}\right)^{-\beta-\rho}\:d|\nu|(n).
\end{eqnarray*}
As $\nu\in M_{\beta}$ we have finiteness of the quantity $|\nu|\ast q^{i\beta}_a(\underline{0})$. Lemma \ref{integralfinite} now implies that the integral on the right-hand side of the inequality above is finite. This completes the proof.
\end{proof}
We end this section with the following important definitions which constitute the heart of the matter.
\begin{definition}\label{impdefnna}
\begin{enumerate}
\item[i)] A function $u$ defined on $S$ is said to have admissible limit $L\in\C$, at $n_0\in N$, if for each $\alpha>0$
\begin{equation*}
\lim_{\substack{a\to 0\\(n,a)\in \Gamma_{\alpha}(n_0)}}u(n,a)=L,
\end{equation*}
where  
\begin{equation}\label{admissibledomain}
\Gamma_{\alpha}(n_0):=\{(n,a)\in S\mid{\bf d}(n_0,n)<\alpha a\}=\{(n,a)\in S\mid d(n_0^{-1}n)<\alpha a\}
\end{equation} 
is called the admissible domain with vertex at $n_0$ and aperture $\alpha$.
\item[ii)] Given a measure $\mu$ on $N$, we say that $\mu$ has strong derivative $L\in[0,\infty)$ at $n_0\in N$ if
\begin{equation*}
\lim_{r\to 0}\frac{\mu(n_0\delta_r(B))}{m(n_0\delta_r(B))}=L,
\end{equation*}
holds for every $d$-ball $B\subset N$. The strong derivative of $\mu$ at $n_0$, if it exists, is denoted by $D\mu(n_0)$. 
\item[iii)] A sequence of complex valued functions $\{u_j\}_{j\in\N}$ defined on $S$ is said to converge normally to a function $u$ if $\{u_j\}$ converges to $u$ uniformly on all compact subsets of $S$.
\item[iv)] A sequence of complex valued functions $\{u_j\}_{j\in\N}$ defined on $S$ is said to be locally bounded if given any compact set $K\subset S$, there exists a positive constant $C_K$ (depending only on $K$) such that for all $j\in\N$ and all $x\in K$,
\begin{equation*}
|u_j(x)|\leq C_K.
\end{equation*}
\item[v)] A sequence of positive measures $\{\mu_j\}$ on $N$ is said to converge to a positive measure $\mu$ on $N$ in weak* if
\begin{equation*}
\lim_{j\to\infty}\int_{N}\psi(n)\:d\mu_j(n)=\int_{N}\psi(n)\:d\mu(n),
\end{equation*}
for all $\psi\in C_c(N)$.
\item [vi)] For a differential operator $D$ on $S$, a smooth function $u$ on $S$ satisfying $Du=0$ is said to be a $D$-harmonic function. 
\end{enumerate}
\end{definition}
\begin{remark}
For the notion of admissible convergence in the context of Riemannian symmetric spaces of noncompact type with real rank one we refer the reader to \cite[P.158]{KP}. We recall that for a measure $\mu$ on $N$, we say that $n_0\in N$ is a Lebesgue point of $\mu$ if there exists $L\in\C$, such that
\begin{equation}\label{lebn}
\lim_{r\to 0}\frac{|\mu-Lm|(B(n_0,r))}{m(B(n_0,r))}=0.
\end{equation}
It can be seen that $\mu$ has strong derivative $L$ at each Lebesgue point $n_0$ (see \cite[Remark 3.2]{Sar}). It was shown in \cite[Theorem 2.2]{Mi} that if $S=NA$ is a rank one symmetric space then $\mathcal{Q}_{\lambda}[\mu]$ (see (\ref{qlamconvmu})) has admissible limit $L$ at each Lebesgue point $n_0$ of $\mu$, where $L$ is as in (\ref{lebn}), whenever $\text{Im}(\lambda)>0$. In \cite[P.12-16]{Sar}, we have constructed an example of a positive measure on the Heisenberg group to show that the set of all points where strong derivative exists is strictly larger than the set of Lebesgue points of the measure. We shall show that if $\lambda=i\beta$, for some $\beta>0$, then $\mathcal{Q}_{\lambda}[\mu]$ has admissible limit $L$ at $n_0$ if and only if $\mu$ has strong derivative $L$ at $n_0$, provided $\mu$ is positive. 
\end{remark}

\section{Some auxilary results}
We start with the following result which relates normal convergence of Poisson integrals of a sequence of positive measures $\{\mu_j\mid j\in\N\}$ with the weak* convergence of $\{\mu_j\mid j\in\N\}$.
\begin{lemma}\label{normalna}
Let $\beta>0$. Suppose that $\{\mu_j\mid j\in\N\}\subset M_{\beta}$ and $\mu\in M_{\beta}$ are positive measures. If $\{\mathcal{Q}_{i\beta}[\mu_j]\}$ converges normally to $\mathcal{Q}_{i\beta}[\mu]$, then $\{\mu_j\}$ converges to $\mu$ in weak*.
\end{lemma}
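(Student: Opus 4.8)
The plan is to fix an arbitrary $\psi\in C_c(N)$ and show $\int_N\psi\,d\mu_j\to\int_N\psi\,d\mu$ by inserting the regularizations $\mathcal{Q}_{i\beta}\psi(\cdot,a)$, for an auxiliary parameter $a>0$, and running a three-$\varepsilon$ argument. Concretely, I would write
\[
\left|\int_N\psi\,d(\mu_j-\mu)\right|\le A_j(a)+B_j(a)+C(a),
\]
where $A_j(a)=\left|\int_N(\psi-\mathcal{Q}_{i\beta}\psi(\cdot,a))\,d\mu_j\right|$, $B_j(a)=\left|\int_N\mathcal{Q}_{i\beta}\psi(\cdot,a)\,d(\mu_j-\mu)\right|$, and $C(a)=\left|\int_N(\mathcal{Q}_{i\beta}\psi(\cdot,a)-\psi)\,d\mu\right|$. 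The role of the three terms is: $B_j(a)$ encodes the hypothesis (normal convergence), while $A_j(a)$ and $C(a)$ measure the error from replacing $\psi$ by $\mathcal{Q}_{i\beta}\psi(\cdot,a)$ and must be made small uniformly in $j$.

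First I would dispose of $B_j(a)$ for each fixed $a$. Applying Lemma \ref{fubinina} with $f=\psi$ to both $\mu_j$ and $\mu$ turns $\int_N\mathcal{Q}_{i\beta}\psi(\cdot,a)\,d\mu_j$ and $\int_N\mathcal{Q}_{i\beta}\psi(\cdot,a)\,d\mu$ into $\int_N\mathcal{Q}_{i\beta}[\mu_j](\cdot,a)\,\psi\,dm$ and $\int_N\mathcal{Q}_{i\beta}[\mu]( \cdot,a)\,\psi\,dm$, integrals over the compact set $\mathrm{supp}\,\psi$. Since $\mathrm{supp}\,\psi\times\{a\}$ is a compact subset of $S$ and $\mathcal{Q}_{i\beta}[\mu_j]\to\mathcal{Q}_{i\beta}[\mu]$ normally, the integrands converge uniformly there, so $B_j(a)\to0$ as $j\to\infty$. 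Next I would control $A_j(a)$ and $C(a)$ using Lemma \ref{unifna}: setting $\varepsilon_a=\sup_{n\in N}|\psi(n)-\mathcal{Q}_{i\beta}\psi(n,a)|/q^{i\beta}(n)$, that lemma gives $\varepsilon_a\to0$ as $a\to0$ together with the pointwise bound $|\psi(n)-\mathcal{Q}_{i\beta}\psi(n,a)|\le\varepsilon_a\,q^{i\beta}(n)$. Integrating against the positive measures yields
\[
A_j(a)\le\varepsilon_a\int_N q^{i\beta}\,d\mu_j,\qquad C(a)\le\varepsilon_a\int_N q^{i\beta}\,d\mu,
\]
and since $\mu\in M_\beta$ forces $\int_N q^{i\beta}\,d\mu=\mathcal{Q}_{i\beta}[\mu](\underline{0},1)<\infty$, we get $C(a)\to0$.

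The hard part, and the one place where normal convergence and positivity are used jointly, is obtaining a bound on $\int_N q^{i\beta}\,d\mu_j$ that is uniform in $j$. Here I would exploit that $q^{i\beta}$ is even (inherited from $P_a(n)=P_a(n^{-1})$) together with the defining formula for $\mathcal{Q}_{i\beta}$, which gives $\int_N q^{i\beta}\,d\mu_j=\mathcal{Q}_{i\beta}[\mu_j](\underline{0},1)$. Evaluating the normally convergent sequence $\mathcal{Q}_{i\beta}[\mu_j]$ at the single interior point $(\underline{0},1)$ shows this numerical sequence converges, hence is bounded, so $M:=\sup_j\int_N q^{i\beta}\,d\mu_j<\infty$ and $A_j(a)\le\varepsilon_a M$ uniformly in $j$. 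With all three estimates in hand the argument closes routinely: given $\varepsilon>0$, first choose $a$ so small that $\varepsilon_a M<\varepsilon/3$ and $C(a)<\varepsilon/3$, then (with $a$ now fixed) choose $J$ so that $B_j(a)<\varepsilon/3$ for $j\ge J$, whence $\left|\int_N\psi\,d(\mu_j-\mu)\right|<\varepsilon$ for $j\ge J$. I expect the only genuinely delicate point to be the uniform bound $M$; everything else is the insertion of an approximate identity plus the Fubini identity of Lemma \ref{fubinina}, and the positivity of the $\mu_j$ is exactly what lets the single-point bound $\mathcal{Q}_{i\beta}[\mu_j](\underline{0},1)$ control the full integral $\int_N q^{i\beta}\,d\mu_j$.
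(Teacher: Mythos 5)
Your proposal is correct and follows essentially the same route as the paper: the identical three-term decomposition, Lemma \ref{fubinina} to convert the middle term into an integral against $\mathcal{Q}_{i\beta}[\mu_j]-\mathcal{Q}_{i\beta}[\mu]$ over $\mathrm{supp}\,\psi\times\{a\}$, Lemma \ref{unifna} to control the outer terms by $\varepsilon_a\int_N q^{i\beta}\,d\mu_j$, and boundedness of $\mathcal{Q}_{i\beta}[\mu_j](\underline{0},1)$ (from normal convergence at a single point, plus positivity and the symmetry $q^{i\beta}(n)=q^{i\beta}(n^{-1})$) to make that bound uniform in $j$. The paper's proof is the same argument with the quantifiers ordered as ``fix $\epsilon$, pick $a_0$, then pick $j_0$,'' which matches your closing step.
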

\begin{proof}
We have to show that if $f\in C_c(N)$, then  
\begin{equation*}
\lim_{j\to\infty}\int_{N}f(n)\:d\mu_j(n)=\int_{N}f(n)\:d\mu(n).
\end{equation*}
We assume that $f\in C_c(N)$, with $\text{supp}\:f\subset B(\underline{0},R)$ and $R>1$. For any $a>0$, we write
\begin{eqnarray}
&&\int_{N}f(n)\:d\mu_j(n)-\int_{N}f(n)\:d\mu(n)\nonumber\\
&=&\int_{N}(f(n)-\mathcal{Q}_{i\beta}f(n,a))\:d\mu_j(n)+\int_{N}\mathcal{Q}_{i\beta} f(n,a)\:d\mu_j(n)-\int_{N}\mathcal{Q}_{i\beta}f(n,a)\:d\mu(n)\nonumber\\
&&\:\:\:\:\:+\int_{N}(\mathcal{Q}_{i\beta}f(n,a)-f(n))\:d\mu(n).\label{ineq1na}
\end{eqnarray}
Given $\epsilon>0$, by Lemma \ref{unifna} we get some $a_0>0$, such that for all $n\in N$
\begin{equation}\label{uniformineqna}
\frac{|\mathcal{Q}_{i\beta} f(n,a_0)-f(n)|}{q^{i\beta}(n)}<\epsilon.
\end{equation}
Using Lemma \ref{fubinina}, it follows that
\begin{equation*}
\int_{N}\mathcal{Q}_{i\beta}f(n,a)\:d\mu_j(n)-\int_{N}\mathcal{Q}_{i\beta}f(n,a)\:d\mu(n)=\int_{N}\left(\mathcal{Q}_{i\beta}[\mu_j](n,a)-\mathcal{Q}_{i\beta}[\mu](n,a)\right)f(n)\:dm(n).
\end{equation*}
Applying the relation above for $a=a_0$ in (\ref{ineq1na}) we get that
\begin{eqnarray}\label{estimatenormal}
&&\left|\int_{N}f(n)\:d\mu_j(n)-\int_{N}f(n)\:d\mu(n)\right|\nonumber\\
&\leq&\int_{N}\left|f(n)-\mathcal{Q}_{i\beta}f(n,a_0)\right|\:d\mu_j(n)\nonumber\\&&\:\:\:\:\:+\int_{B(\underline{0},R)}\left|\mathcal{Q}_{i\beta}[\mu_j](n,a_0)-
\mathcal{Q}_{i\beta}[\mu](n,a_0)\right||f(n)|\:dm(n)\nonumber\\&&\:\:\:\:\:\:+\int_{N}\left|f(n)-\mathcal{Q}_{i\beta}f(n,a_0)\right|\:d\mu(n)\nonumber\\&=&I_1(j)+I_2(j)+I_3.
\end{eqnarray}
In order to estimate $I_1(j)$ we use (\ref{uniformineqna}) to get 
\begin{equation*}
I_1(j)=\int_{N}\frac{\left|f(n)-\mathcal{Q}_{i\beta}f(n,a_0)\right|}{q^{i\beta}(n)}q^{i\beta}(n)\:d\mu_j(n)<\epsilon\int_{N}q^{i\beta}(n)\:d\mu_j(n)=\epsilon\mathcal{Q}_{i\beta}[\mu_j](e).
\end{equation*} 
Similarly, we can prove that
\begin{equation*}
I_3\leq\epsilon\mathcal{Q}_{i\beta}[\mu](e).
\end{equation*}
Since $\{\mathcal{Q}_{i\beta}[\mu_j]\}$ converges to $\mathcal{Q}_{i\beta}[\mu]$ normally, the sequence $\{\mathcal{Q}_{i\beta}[\mu_j](e)\}$, in particular, is bounded. Hence, taking $C$ to be the supremum of $\{\mathcal{Q}_{i\beta}[\mu_j](e)+\mathcal{Q}_{i\beta}[\mu](e)\}$, we get that for all $j\in\N$
\begin{equation*}
I_1(j)+I_3\leq2C\epsilon.
\end{equation*}
By hypothesis, $\{\mathcal{Q}_{i\beta}[\mu_j]\}$ converges normally to $\mathcal{Q}_{i\beta}[\mu]$. Hence, there exists some $j_0\in\N$ such that for all $j\geq j_0$,
\begin{equation*}
\|\mathcal{Q}_{i\beta}[\mu_j]-\mathcal{Q}_{i\beta}[\mu]\|_{L^{\infty}\left(\overline{B(\underline{0},R)}\times\{a_0\}\right)}<\epsilon.
\end{equation*}
This implies that for all $j\geq j_0$,
\begin{equation*}
I_2(j)\leq \epsilon \|f\|_{L^1(N)}.
\end{equation*}
Hence, It follows from (\ref{estimatenormal}) that for all $j\geq j_0$
\begin{equation*}
\left|\int_{N}f(n)\:d\mu_j(n)-\int_{N}f(n)\:d\mu(n)\right|\leq\epsilon (2C+\|f\|_{L^1(N)}).
\end{equation*}
This completes the prove.
\end{proof}
We will also need the following measure theoretic result on $N$ proved in \cite[Lemma 4.3]{Sar}. 
\begin{lemma}\label{mthna}
Suppose $\{\mu_j\}_{j\in\N}$, $\mu$  are positive measures on $N$ and $\{\mu_j\}$ converges to $\mu$ in weak*. Then for some $L\in[0,\infty)$, $\mu=Lm$ if and only if $\{\mu_j(B)\}$ converges to $Lm(B)$ for every $d$-ball $B\subset N$.
\end{lemma}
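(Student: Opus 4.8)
The plan is to deduce the equivalence from the two one-sided (portmanteau) inequalities for vague convergence of positive Radon measures, and, for the harder reverse implication, to pass from agreement on balls to equality of the limiting measures by a differentiation argument. First I would record two preliminaries. Using the polar-coordinate formula (\ref{polarcordinatena}) together with (\ref{measuredilation}), every $d$-ball satisfies $m(B(n_0,r))=m(B(\underline 0,1))\,r^Q$ and its boundary sphere $\partial B(n_0,r)=\{n:d(n_0^{-1}n)=r\}$ is $m$-null; moreover $(N,\mathbf d,m)$ is a space of homogeneous type, the doubling property $m(B(n,2r))=2^Q m(B(n,r))$ being immediate. Next, from $\mu_j\to\mu$ in weak* (Definition \ref{impdefnna}, v)) and positivity I would establish
\[
\mu(U)\le\liminf_{j}\mu_j(U)\quad(U\ \text{open}),\qquad \limsup_{j}\mu_j(K)\le\mu(K)\quad(K\ \text{compact}),
\]
both obtained by squeezing $\chi_U$ from below and $\chi_K$ from above by functions in $C_c(N)$ via Urysohn's lemma and using the defining convergence on $C_c(N)$.

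For the forward implication, suppose $\mu=Lm$ and fix a $d$-ball $B$. Then $\mu(\partial B)=L\,m(\partial B)=0$, so applying the first inequality to the open set $B$ and the second to the compact set $\overline B$ gives $\liminf_j\mu_j(B)\ge\mu(B)=Lm(B)=\mu(\overline B)\ge\limsup_j\mu_j(\overline B)\ge\limsup_j\mu_j(B)$, whence $\mu_j(B)\to Lm(B)$.

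For the reverse implication, suppose $\mu_j(B)\to Lm(B)$ for every $d$-ball $B$. The first step is to show that $\mu$ agrees with $Lm$ on every ball. Lower semicontinuity on $B(n_0,r)$ gives $\mu(B(n_0,r))\le\liminf_j\mu_j(B(n_0,r))=Lm(B(n_0,r))$. For the opposite inequality I would use monotonicity in the radius: for $\rho<r$ the inclusion $\overline B(n_0,\rho)\subset B(n_0,r)$ and upper semicontinuity on the compact set $\overline B(n_0,\rho)$ yield $\mu(B(n_0,r))\ge\limsup_j\mu_j(B(n_0,\rho))=Lm(B(n_0,\rho))$, and letting $\rho\uparrow r$ produces $\mu(B(n_0,r))\ge Lm(B(n_0,r))$ by continuity of $r\mapsto r^Q$. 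Hence $\mu(B)=Lm(B)$ for every $d$-ball.

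Finally I would upgrade this to $\mu=Lm$. Since $\mu(B(n_0,r))/m(B(n_0,r))=L$ for all $n_0$ and $r$, the derivative of $\mu$ with respect to $m$ equals $L$ at every point; invoking the Lebesgue differentiation theorem in the space of homogeneous type $(N,\mathbf d,m)$ (equivalently, the Lebesgue decomposition $\mu=h\,m+\mu_s$, where the constancy of the ratio forces $h\equiv L$ almost everywhere and excludes a singular part, whose upper $m$-derivative would otherwise be infinite $\mu_s$-almost everywhere) gives $\mu=Lm$. The \textbf{main obstacle} is precisely this last passage: balls do not form a $\pi$-system, so agreement on balls cannot be promoted to equality of measures by Dynkin's lemma, and one must instead recognize $(N,\mathbf d,m)$ as a space of homogeneous type and appeal to a covering/differentiation theorem. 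Assembling the radius-monotonicity squeeze from the two one-sided portmanteau inequalities in the first step of the reverse direction also requires some care.
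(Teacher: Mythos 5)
The paper does not actually prove this lemma; it is quoted from an external reference (\cite[Lemma 4.3]{Sar}), so there is no in-paper argument to compare against. Judged on its own, your proof is correct and essentially self-contained. The two portmanteau inequalities are valid for this notion of weak* (vague) convergence of positive locally finite Borel measures on $N$, the boundary $\partial B(n_0,r)\subseteq\{n: d(n_0^{-1}n)=r\}$ is indeed $m$-null by the polar-coordinate formula (\ref{polarcordinatena}), and the radius-monotonicity squeeze $Lm(B(n_0,\rho))\le\mu(B(n_0,r))\le Lm(B(n_0,r))$ for $\rho<r$, combined with $m(B(n_0,r))=m(B(\underline{0},1))r^Q$, correctly yields $\mu(B)=Lm(B)$ for every $d$-ball. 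Your diagnosis of the last step as the only delicate point is also accurate: since balls are not a $\pi$-system, one cannot conclude by Dynkin's lemma, and the differentiation theorem for locally finite measures against the doubling measure $m$ on the space of homogeneous type $(N,\mathbf{d},m)$ does finish the job (once the density of the absolutely continuous part is forced to equal $L$ a.e., the singular part vanishes on every ball, hence on every open set by countable subadditivity, hence everywhere by regularity). One remark: this heavy machinery can be avoided entirely. Taking $\psi=m(B(\underline{0},1))^{-1}\chi_{B(\underline{0},1)}$, the identity $\mu\ast\psi_a(n)=\mu(B(n,a))/m(B(n,a))=L$ holds for all $n$ and $a$; pairing with an arbitrary $f\in C_c(N)$, using Fubini and the approximate-identity property of $\{\psi_a\}$ (Remark \ref{approximateidentity}), gives $\int_N f\,d\mu=L\int_N f\,dm$ directly, whence $\mu=Lm$ by the Riesz representation theorem. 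Either route is sound; yours trades this elementary convolution argument for the covering/differentiation theory of spaces of homogeneous type.
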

We shall next prove a result regarding pointwise comparison between the Hardy-Littlewood maximal function of a positive measure on $N$ and Poisson maximal functions of the same measure. We recall that for a positive measure $\mu$ on $N$, the Hardy-Littlewood maximal function $M_{HL}(\mu)$ of $\mu$ is defined by
\begin{equation*}
M_{HL}(\mu)(n)=\sup_{r>0}\frac{\mu(B(n,r))}{m(B(n,r))},\:\:\:\:n\in N.
\end{equation*}
\begin{lemma}\label{maximalna}
If $\mu\in M_{\beta}$ is a positive measure, for some $\beta>0$, and $\alpha>0$, then there exist positive constants $C_{\beta}$ and $C_{\alpha,\beta}$ such that for all $n_0\in N$,
\begin{equation*}
C_{\beta}M_{HL}(\mu)(n_0)\leq\sup_{a>0}\mathcal{Q}_{i\beta}[\mu](n_0,a)\leq\sup_{na\in \Gamma_{\alpha}(n_0)}\mathcal{Q}_{i\beta}[\mu](n,a)\leq C_{\alpha,\beta}M_{HL}(\mu)(n_0).
\end{equation*}
\end{lemma}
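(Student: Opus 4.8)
The plan is to establish the chain of three inequalities separately, treating the middle one as trivial and concentrating on the two outer estimates, both of which should follow from comparing the Poisson-type kernel $q^{i\beta}_a$ with indicator functions of $d$-balls. The key structural fact is the explicit expression \eqref{explicitqbetamu} for $\mathcal{Q}_{i\beta}[\mu](n,a)$, together with the homogeneity \eqref{measuredilation} of $m$ under dilations and the left-invariance of $\mathbf{d}$. By left-translation invariance it suffices to understand how much mass $\mu$ places on balls $B(n_0,r)$ and to match the scale $r$ with the parameter $a$.

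**First I would** prove the leftmost inequality $C_{\beta}M_{HL}(\mu)(n_0)\le\sup_{a>0}\mathcal{Q}_{i\beta}[\mu](n_0,a)$. Fixing $n_0$ and an arbitrary radius $r>0$, I would set $a=r^2$ (or a comparable power matching the dilation \eqref{nonisotropic}) and bound $\mathcal{Q}_{i\beta}[\mu](n_0,a)$ from below by restricting the integral in \eqref{explicitqbetamu} to the ball $B(n_0,r)$. On that ball the denominator $\bigl(16a^2+8a\|X-X_1\|^2+d((X_1,Z_1)^{-1}(X,Z))^2\bigr)^{\rho+\beta}$ is bounded above by a constant multiple of $(a^2)^{\rho+\beta}=a^{Q+2\beta}$, since $d(n_0^{-1}n_1)<r$ forces the $d^2$ term to be $O(a)$ and the $\|X-X_1\|^2$ term likewise controlled. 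Using $m(B(n_0,r))=m(B(\underline 0,1))\,r^Q$ and the prefactor $c_\beta a^{2\beta}$, the powers of $a$ cancel and one is left with a constant times $\mu(B(n_0,r))/m(B(n_0,r))$. Taking the supremum over $r>0$ yields the claim.

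**Next I would** prove the rightmost inequality $\sup_{(n,a)\in\Gamma_\alpha(n_0)}\mathcal{Q}_{i\beta}[\mu](n,a)\le C_{\alpha,\beta}M_{HL}(\mu)(n_0)$, which is the standard way of dominating a Poisson maximal function by the Hardy--Littlewood maximal function and should be the more involved half. Fixing $(n,a)\in\Gamma_\alpha(n_0)$, so that $\mathbf{d}(n_0,n)<\alpha a$, I would decompose the domain of integration in \eqref{explicitqbetamu} into the annuli $A_k=\{n_1:2^{k-1}\sqrt a\le \mathbf d(n,n_1)<2^k\sqrt a\}$ for $k\ge 0$ together with a central ball. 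On each annulus the denominator is bounded below by a multiple of $(2^k\sqrt a)^{2(\rho+\beta)}=(2^k)^{2(\rho+\beta)}a^{\rho+\beta}$, while $\mu(A_k)\le\mu(B(n,2^k\sqrt a))\le m(B(n,2^k\sqrt a))\,M_{HL}(\mu)(n)$, and the latter ball has measure $C\,(2^k\sqrt a)^Q=C\,2^{kQ}a^\rho$. Summing, the factors $a^\rho$ cancel against $a^{\rho+\beta}$ up to the prefactor $c_\beta a^{2\beta}$, leaving a geometric series $\sum_k 2^{k(Q-2(\rho+\beta))}=\sum_k 2^{-2k\beta}$, which converges precisely because $\beta>0$. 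Finally I would replace $M_{HL}(\mu)(n)$ by $M_{HL}(\mu)(n_0)$ at the cost of a constant depending on $\alpha$ and $\tau$, using the quasi-triangle inequality \eqref{quasitriangle}: since $\mathbf d(n_0,n)<\alpha a$, any ball $B(n,r)$ is contained in a ball $B(n_0,\tau(r+\alpha a))$ of comparable measure whenever $r\gtrsim\sqrt a$, allowing the maximal function at $n$ to be controlled by that at $n_0$.

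**The hard part will be** the bookkeeping in the annular decomposition of the second estimate — in particular keeping the scale matching between $a$ and $\sqrt a$ consistent with the anisotropic dilation \eqref{nonisotropic}, and verifying that the geometric series converges with a constant independent of $n_0$ and of $(n,a)\in\Gamma_\alpha(n_0)$. The convergence hinges entirely on the strict positivity of $\beta$, which is why the hypothesis $\beta>0$ (equivalently $\mathrm{Im}(\lambda)>0$) is essential; at $\beta=0$ the series diverges logarithmically and the pointwise domination fails. The comparison of maximal functions at the two nearby points $n$ and $n_0$ via the quasi-metric constant $\tau$ is routine but must be handled carefully because $\mathbf d$ is only a quasi-metric, so the triangle inequality carries the factor $\tau$ from \eqref{quasitriangle}.
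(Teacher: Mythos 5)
Your overall strategy is exactly the paper's: the middle inequality is trivial, the lower bound comes from restricting the integral in \eqref{explicitqbetamu} to a single ball, and the upper bound comes from a dyadic annular decomposition using the kernel decay $q^{i\beta}(n)\leq c_\beta d(n)^{-Q-2\beta}$, a geometric series $\sum_j 2^{-2j\beta}$ converging because $\beta>0$, and the quasi-triangle inequality to pass from balls centred at $n$ to balls centred at $n_0$. However, there is a genuine error in the scale matching, and it breaks both outer estimates as you have written them. The homogeneous norm $d(X,Z)=(\|X\|^4+16\|Z\|^2)^{1/2}$ is homogeneous of degree one under $\delta_a$ (i.e.\ $d(\delta_a(n))=a\,d(n)$), even though the $X$-coordinate itself scales like $\sqrt a$; consequently the $d$-ball whose indicator is comparable to $q^{i\beta}_a$ has radius $\sim a$, not $\sim\sqrt a$, and $m(B(n,r))\sim r^Q$ with $r\sim a$ giving $a^Q=a^{2\rho}$, not $a^\rho$. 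With your choice $a=r^2$ in the lower bound, a point of $B(n_0,r)$ only gives $d(n_0^{-1}n_1)^2<r^2=a$, so the denominator is $O(a^{\rho+\beta})$ rather than the $O(a^{2(\rho+\beta)})$ you claim, and the final estimate acquires a spurious factor $a^\beta\to 0$; the claimed cancellation of powers of $a$ does not occur. Similarly, in the upper bound the central ball $B(n,\sqrt a)$ gives $I\leq c\,a^{-Q}\mu(B(n,\sqrt a))\leq c\,a^{-Q/2}M_{HL}(\mu)(n)$, which blows up as $a\to 0$. Both estimates work cleanly once you take $r=a$ and annuli $\{2^{j-1}\alpha a\leq \mathbf d(n,n_1)<2^j\alpha a\}$: in the paper's lower bound, $d(n_0^{-1}n_1)<a$ forces $\|X_0-X_1\|^2<a$ and $d^2<a^2$, so the kernel is bounded below by $c_\beta(16+8+1)^{-\rho-\beta}a^{-Q}$ on $B(n_0,a)$.

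One further caution: your plan to ``replace $M_{HL}(\mu)(n)$ by $M_{HL}(\mu)(n_0)$ at the cost of a constant'' is not valid as a pointwise inequality between maximal functions (a point mass at $n$ makes $M_{HL}(\mu)(n)=\infty$ while $M_{HL}(\mu)(n_0)$ stays finite). It only works because every ball appearing in the decomposition has radius at least comparable to $a$, so that $B(n,2^j\alpha a)\subset B(n_0,2^{j+1}\tau\alpha a)$ with \emph{comparable} measures; the paper avoids the detour entirely by comparing each annulus directly with a ball centred at $n_0$. You flagged this scale issue yourself as the hard part, and it is: the fix is simply that the quasi-metric $\mathbf d$, not the Euclidean geometry of the $X$-variable, dictates the matching $r\sim a$.
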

\begin{proof}
We fix an $n_0=(X_0,Z_0)\in N$ and note that the second inequality follows from the definition of supremum. To prove the left-most inequality we take $a>0$, and using the expression of $\mathcal Q_{i\beta}[\mu]$ given in (\ref{explicitqbetamu}), note that
\begin{eqnarray}
\mathcal{Q}_{i\beta}[\mu](n_0,a)&=&c_{\beta}\:a^{-Q}\int_{N}\frac{1}{\left(16+8\frac{\|X_0-X\|^2}{a}+\frac{d\left((X,Z)^{-1}(X_0,Z_0)\right)^2}{a^2}\right)^{\rho+\beta}}\:d\mu(X_1,Z_1)\nonumber\\
&\geq&c_{\beta}\:a^{-Q}\int_{B(n_0,a)}\frac{1}{\left(16+8\frac{\|X_0-X\|^2}{a}+\frac{d\left((X,Z)^{-1}(X_0,Z_0)\right)^2}{a^2}\right)^{\rho+\beta}}\:d\mu(X_1,Z_1)
\label{maximalna1}.
\end{eqnarray}
We observe that for $(X,Z)\in B(n_0,a)$, 
\begin{equation*}
d\left((X,Z)^{-1}(X_0,Z_0)\right)^2=\|X_0-X\|^4+16 \|Z-Z_0\|^2<a^2,
\end{equation*}
and hence  
\begin{equation*}
\|X_0-X\|^2<a.
\end{equation*}
Consequently, for all $(X,Z)\in B(n_0,a)$
\begin{equation*}
\frac{c_{\beta}}{\left(16+8\frac{\|X_0-X\|^2}{a}+\frac{d\left((X,Z)^{-1}(X_0,Z_0)\right)^2}{a^2}\right)^{\rho+\beta}}\geq\frac{c_{\beta}}{(16+8+1)^{\rho+\beta}}=C_{\beta}'. 
\end{equation*}
 Using this observation in (\ref{maximalna1}), we get
\begin{equation*}
\mathcal{Q}_{i\beta}[\mu](n_0,a)\geq C_{\beta}' a^{-Q} \mu(B(n_0,a))=C_{\beta}\frac{\mu(B(n_0,a))}{m(B(n_0,a))}.
\end{equation*}
Taking supremum over $a>0$, on both sides of the inequalty above, we get
\begin{equation}\label{poissonradial}
C_{\beta}M_{HL}(\mu)(n_0)\leq\sup_{a>0}\mathcal{Q}_{i\beta}[\mu](n_0,a).
\end{equation}
For the right-most inequality, we take $(n,a)\in \Gamma_{\alpha}(n_0)$, and write
\begin{eqnarray}\label{nontangentialmax}
\mathcal{Q}_{i\beta}[\mu](n,a)&=&\mu\ast q_a^{i\beta}(n)\nonumber\\&=&\int_{B(n,a\alpha)}q^{i\beta}_a(n_1^{-1}n)\:d\mu(n_1)+\int_{B(n,a\alpha)^c}q^{i\beta}_a(n_1^{-1}n)\:d\mu(n_1)\nonumber\\
&=& \int_{B(n,a\alpha)}q^{i\beta}_a(n_1^{-1}n)\:d\mu(n_1)\nonumber\\
&&\:\:\:\:\:\:+a^{-Q}\sum_{j=1}^{\infty}\int_{\{n_1\mid 2^{j-1}a\alpha\leq d(n^{-1}n_1)<2^ja\alpha\}}q^{i\beta}\left(\delta_{a^{-1}}(n_1^{-1}n)\right)\:d\mu(n_1)\nonumber\\&=&I+\sum_{j=1}^{\infty}I_j.\:\:\:\:\:
\end{eqnarray}
It follows from the expression of $q^{i\beta}_a$ (see (\ref{qbetaexpression})) that
\begin{equation*}
q^{i\beta}_a(n_1^{-1}n)\leq c_{\beta}\:a^{-Q},\:\:\:\text{for all}\:\:a>0.
\end{equation*}
This shows that
\begin{equation*}
I\leq c_{\beta}\:a^{-Q}\int_{B(n,a\alpha)}d\mu(n_1)=c_{\alpha,\beta}'\:\frac{\mu(B(n,a\alpha))}{m((B(n_0,a\alpha)))}.
\end{equation*}
It again follows from the expression of $q^{i\beta}$ that  
\begin{equation}\label{estimateofp1}
q^{i\beta}(n_2)\leq c_{\beta}\:d(n_2)^{-Q-2\beta},\:\:\text{for all}\:\:n_2\in N\setminus\{\underline{0}\}.\end{equation}
As $(n,a)\in \Gamma_{\alpha}(n_0)$, that is, $d(n_0^{-1}n)<a\alpha$, it follows from the quasi-triangle inequality (\ref{quasinorm}) that for all $n_1\in B(n,a\alpha)$
\begin{equation*}
d(n_0^{-1}n_1)\leq \tau(d(n_0^{-1}n)+d(n^{-1}n_1))< 2\tau a\alpha,
\end{equation*}
which in turn, implies that $B(n,a\alpha)\subset B(n_0,2\tau a\alpha)$. Applying this observation in the last inequality involving $I$, we get
\begin{equation*}\label{integralI}
I\leq c'_{\alpha,\beta}\:\frac{\mu(B(n_0,2\tau a\alpha))}{m((B(n_0,a\alpha)))}=C'_{\alpha,\beta}\:\frac{\mu(B(n_0,2\tau a\alpha))}{m(B(n_0,2\tau a\alpha))}\leq C_{\alpha,\beta}^{\prime}M_{HL}(\mu)(n_0).\end{equation*}
We now use the estimate (\ref{estimateofp1}) to obtain
\begin{eqnarray*}
I_j&\leq& c_{\beta}\:a^{-Q}\int_{\{n_1\mid 2^{j-1}a\alpha\leq d(n^{-1}n_1)<2^ja\alpha\}}d\left(\delta_{a^{-1}}(n_1^{-1}n)\right)^{-Q-2\beta}\:d\mu(n_1)\\&\leq&c_{\beta, \alpha}\:a^{-Q}\int_{\{n_1\mid 2^{j-1}a\alpha\leq d(n^{-1}n_1)<2^ja\alpha\}}2^{-(Q+2\beta)(j-1)}\:d\mu(n_1)\\&\leq&c_{\beta, \alpha}\:a^{-Q}2^{-(Q+2\beta)(j-1)}\mu(B(n,2^ja\alpha)).
\end{eqnarray*}
Since $d(n_0^{-1}n)<a\alpha$, we have as before $B((n,2^ja\alpha))\subset B(n_0,2^{j+1}\tau a\alpha)$. Hence,
\begin{eqnarray*}
I_j&\leq&c_{\beta,\alpha}\:a^{-Q}2^{-(Q+2\beta)(j-1)}\mu\left(B(n_0,2^{j+1}\tau a\alpha)\right)\\
&=&C_{\beta,\alpha}'\:2^{-(Q+2\beta)(j-1)}2^{(j+1)Q}\frac{\mu\left(B(n_0,2^{j+1}\tau a\alpha)\right)}{m\left(B(n_0,2^{j+1}\tau a\alpha)\right)}\\&\leq&C_{\beta,\alpha}'\:2^{-(Q+2\beta)(j-1)}2^{(j+1)Q}M_{HL}(\mu)(n_0)\\&=&C_{\alpha,\beta}''\:2^{-2j\beta}M_{HL}(\mu)(n_0).\end{eqnarray*}
Invoking these estimates of $I$ and $I_j$ in (\ref{nontangentialmax}), we obtain
\begin{equation*}
\mathcal{Q}_{i\beta}[\mu](n,a)\leq \left(C'_{\alpha,\beta}+C_{\alpha,\beta}''\sum_{j=1}^{\infty}2^{-2j\beta}\right)M_{HL}(\mu)(n_0).
\end{equation*}
Since $(n,a)\in \Gamma_{\alpha}(n_0)$ is arbitrary and $\sum_{j=1}^{\infty}2^{-2j\beta}$ is finite, we get the right-most inequality, namely there exists some positive constant $C_{\alpha,\beta}$ such that
\begin{equation*}
\sup_{na\in \Gamma_{\alpha}(n_0)}\mathcal{Q}_{i\beta}[\mu](n,a)\leq C_{\alpha,\beta}M_{HL}(\mu)(n_0).
\end{equation*}
This completes the proof.
\end{proof}
Given $\beta>0$, we define a second order differential operator $\mathcal{L}^{\beta}$ on $S$ (see \cite[Theorem 3.2]{DK}) having the same second order term as the Laplace-Beltrami operator $\mathcal{L}$ by the formula
\begin{equation*}
\mathcal{L}^{\beta}=a^2\partial_a^2+\mathcal{L}_a+(1-2\beta)a\partial_a,
\end{equation*}
where $\mathcal{L}$ is given by (\ref{laplacebeltramionna}). When $\beta=\rho=Q/2$, we recover $\mathcal{L}$. We note that 
\begin{equation}\label{diflandlbeta}
\mathcal{L}^{\beta}-\mathcal{L}=2(\rho-\beta)a\partial_a=2(\rho-\beta)E_0.
\end{equation}
We recall that $E_0=a\partial_a$ is the left-invariant vector field on $S$ corresponding to the basis element $e_0=(0,0,1)$ of $\mathfrak{s}$ and hence $\mathcal{L}^{\beta}$ is left $S$-invariant. The following lemma shows that there is a one to one correspondence between the eigenfunctions of $\mathcal{L}$ with eigenvalue $\beta^2-\rho^2$ and $\mathcal{L}^{\beta}$-harmonic functions (as defined in Definition \ref{impdefnna}, vi)).
\begin{lemma}\label{lbetaharmonic}
Let $\beta>0$ and let $u$ be a smooth function on $S$. Then $u$ is an eigenfunction of $\mathcal{L}$ with eigenvalue $\beta^2-\rho^2$, if and only if the function $(n,a)\mapsto a^{\beta-\rho}u(n,a)$, is $\mathcal{L}^{\beta}$-harmonic.
\end{lemma}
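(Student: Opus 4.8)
The plan is to establish the single operator identity
\begin{equation*}
\mathcal{L}^{\beta}\left(a^{\beta-\rho}u\right)=a^{\beta-\rho}\left(\mathcal{L}u-(\beta^2-\rho^2)u\right),
\end{equation*}
valid for every smooth $u$ on $S$, from which both implications of the lemma follow at once. Equivalently, writing $c=\beta-\rho$, I would compute the conjugated operator $a^{-c}\circ\mathcal{L}^{\beta}\circ a^{c}$ and show that it equals $\mathcal{L}-(\beta^2-\rho^2)$. The key structural observation that makes this purely mechanical is that, by its explicit form \eqref{la}, the operator $\mathcal{L}_a$ differentiates only in the $X$ and $Z$ variables and has coefficients depending on $a$ and $X$ but no $\partial_a$; consequently $\mathcal{L}_a$ commutes with multiplication by any function of $a$ alone, so that $a^{-c}\mathcal{L}_a\, a^{c}=\mathcal{L}_a$.

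For the remaining terms I would use the elementary conjugation rules for the Euler operator $E_0=a\partial_a$. A direct computation gives
\begin{equation*}
a^{-c}(a\partial_a)a^{c}=a\partial_a+c,
\end{equation*}
and, writing $a^2\partial_a^2=(a\partial_a)^2-a\partial_a$ and using that conjugation is multiplicative,
\begin{equation*}
a^{-c}\left(a^2\partial_a^2\right)a^{c}=a^2\partial_a^2+2c\,a\partial_a+(c^2-c).
\end{equation*}
Substituting these into $\mathcal{L}^{\beta}=a^2\partial_a^2+\mathcal{L}_a+(1-2\beta)a\partial_a$ and collecting terms, the coefficient of $a\partial_a$ becomes $2c+(1-2\beta)=1-2\rho=1-Q$ (using $c=\beta-\rho$ and $Q=2\rho$), which is exactly the first-order coefficient of $\mathcal{L}$ in \eqref{laplacebeltramionna}; and the zeroth-order term becomes $c^2-c+(1-2\beta)c=c(c-2\beta)=(\beta-\rho)(-\beta-\rho)=-(\beta^2-\rho^2)$. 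Hence $a^{-c}\mathcal{L}^{\beta}a^{c}=\mathcal{L}-(\beta^2-\rho^2)$, which is the desired identity.

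With the identity in hand the equivalence is immediate: since $a^{\beta-\rho}>0$ for every $a\in(0,\infty)$, the function $(n,a)\mapsto a^{\beta-\rho}u(n,a)$ satisfies $\mathcal{L}^{\beta}(a^{\beta-\rho}u)=0$ if and only if $\mathcal{L}u-(\beta^2-\rho^2)u=0$, i.e. if and only if $u$ is an eigenfunction of $\mathcal{L}$ with eigenvalue $\beta^2-\rho^2$. There is no genuine obstacle beyond careful bookkeeping of the constants; the only point that truly uses the structure of the operator, rather than formal manipulation, is the commutation $a^{-c}\mathcal{L}_a\, a^{c}=\mathcal{L}_a$, which I would justify by pointing to the explicit expression \eqref{la}. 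As a consistency check, one may also recover the identity from \eqref{diflandlbeta}, since $\mathcal{L}^{\beta}-\mathcal{L}=2(\rho-\beta)E_0$ and conjugating $E_0=a\partial_a$ by $a^{c}$ produces precisely the shift accounting for the eigenvalue $\beta^2-\rho^2$.
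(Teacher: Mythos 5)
Your proof is correct and is essentially the same computation as the paper's: both rest on the Leibniz rule for $a\partial_a$ acting on powers of $a$ together with the observation that $\mathcal{L}_a$ contains no $\partial_a$ and hence commutes with multiplication by $a^{c}$. The only cosmetic difference is that you package the calculation as the single conjugation identity $a^{-c}\mathcal{L}^{\beta}a^{c}=\mathcal{L}-(\beta^{2}-\rho^{2})$, which yields both implications at once, whereas the paper expands $\mathcal{L}(a^{\rho-\beta}F)$ and treats the two directions separately.
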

\begin{proof}
We set \begin{equation*}
F(n,a)=a^{\beta-\rho}u(n,a),\:\:\:(n,a)\in S.
\end{equation*} 
Suppose $u$ is an eigenfunction of $\mathcal{L}$ with eigenvalue $\beta^2-\rho^2$. 
Note that \begin{equation*}
\mathcal{L}(a^{\rho-\beta}F)=\mathcal{L}u=(\beta^2-\rho^2)u=(\beta^2-\rho^2)a^{\rho-\beta}F.
\end{equation*}
Since $\mathcal{L}_a$ does not have any term involving $\partial_a$ (see (\ref{la})), expanding the left-hand side of the equation above we obtain
\begin{eqnarray*}
	(\beta^2-\rho^2)a^{\rho-\beta}F&=&\left((1-2\rho)a\partial_a+a^2\partial_a^2+\mathcal{L}_a\right)(a^{\rho-\beta}F)\\&=&(1-2\rho)a\left((\rho-\beta)a^{\rho-\beta-1}F+a^{\rho-\beta}\partial_aF\right)\\&&\:\:\:+a^2\left((\rho-\beta)(\rho-\beta-1)a^{\rho-\beta-2}F+2(\rho-\beta)a^{\rho-\beta-1}\partial_aF+a^{\rho-\beta}\partial_a^2F\right)\\&&\:\:\:+a^{\rho-\beta}\mathcal{L}_aF\\
	&=&a^{\rho-\beta}\left((1-2\rho)(\rho-\beta)+(\rho-\beta)(\rho-\beta-1)\right)F\\
	&&\:\:\:+a^{\rho-\beta}\left((1-2\rho)+2(\rho-\beta)\right)a\partial_aF+a^{\rho-\beta}\left(a^2\partial_a^2F+\mathcal{L}_aF\right)\\
	&=&a^{\rho-\beta}\left(\beta^2-\rho^2+(1-2\beta)a\partial_a+a^2\partial_a^2+\mathcal{L}_a\right)F.
	\end{eqnarray*}
Canceling the term $(\beta^2-\rho^2)a^{\rho-\beta}F$ from both sides of the equation above shows that $F$ is $\mathcal{L}^{\beta}$-harmonic.

Conversely, suppose that $F$ is $\mathcal{L}^{\beta}$-harmonic. Using the definition of $\mathcal{L}^{\beta}$ and $F$, we can write
\begin{equation*}
\left(a^2\partial_a^2+\mathcal{L}_a+(1-2\beta)a\partial_a\right)(a^{\beta-\rho}u)=0.
\end{equation*}
Expanding the left-hand side of the equation above as before, we get
\begin{equation*}
(-\beta^2+\rho^2)u+\left((1-Q)a\partial_a+a^2\partial_a^2+\mathcal{L}_a\right)u=0.
\end{equation*}
Hence, $u$ is an eigenfunction of $\mathcal{L}$ with eigenvalue $\beta^2-\rho^2$.
\end{proof}
From (\ref{diflandlbeta}) we see that $\mathcal{L}^{\beta}$ and $\mathcal{L}$ differs by a first order term and hence $\mathcal{L}^{\beta}$ is an ellliptic operator. Applying a vast generalization of Montel's theorem, valid for solutions of hypoelliptic operators (and hence for $\mathcal{L}^{\beta}$-harmonic functions) proved in \cite[Theorem 4]{B} we get the following result.
\begin{lemma}\label{montelna}
Let $\beta>0$, and let $\{F_j\}$ be a sequence of $\mathcal{L}^{\beta}$-harmonic functions on $S$. If $\{F_j\}$ is locally bounded then it has a subsequence which converges normally to a $\mathcal{L}^{\beta}$-harmonic function $F$.
\end{lemma}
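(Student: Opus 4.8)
The plan is to deduce the statement directly from the hypoelliptic Montel-type theorem of Bar \cite[Theorem 4]{B}, so that the only genuine work is to verify that $\mathcal{L}^{\beta}$ lies within its scope. First I would record that $\mathcal{L}^{\beta}$ is hypoelliptic. By the relation (\ref{diflandlbeta}) we have $\mathcal{L}^{\beta}-\mathcal{L}=2(\rho-\beta)E_0$, which is a first-order operator; hence $\mathcal{L}^{\beta}$ and the Laplace-Beltrami operator $\mathcal{L}$ share the same principal (second-order) part. Since $\mathcal{L}$ is the Laplace-Beltrami operator of the left-invariant Riemannian metric $\langle,\rangle_S$, its principal symbol is, up to sign, the associated cometric and is therefore definite, so $\mathcal{L}$ is elliptic; as ellipticity is a property of the principal symbol alone, $\mathcal{L}^{\beta}$ is elliptic as well, with smooth (indeed real-analytic) coefficients. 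Invoking the classical fact that a second-order elliptic operator with smooth coefficients on a smooth manifold is hypoelliptic places $\mathcal{L}^{\beta}$ in the admissible class for Bar's theorem.

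With this in hand the proof is a direct application of \cite[Theorem 4]{B}. The sequence $\{F_j\}$ consists of solutions of $\mathcal{L}^{\beta}F_j=0$ and is locally bounded by hypothesis, so Bar's theorem yields a subsequence converging normally, that is, uniformly on compact subsets of $S$, to a limit $F$ which is again a solution of $\mathcal{L}^{\beta}F=0$, i.e. an $\mathcal{L}^{\beta}$-harmonic function in the sense of Definition \ref{impdefnna}, vi). This is precisely the assertion of the lemma.

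The point I expect to require the most care is the passage from mere local boundedness to a normally convergent subsequence whose limit is itself a solution, since a priori boundedness controls neither the derivatives of the $F_j$ nor the solution property of a limit. This is, however, exactly the content that Bar's theorem packages for hypoelliptic operators, so in the write-up it reduces to the hypoellipticity check above. Should one prefer an argument independent of \cite{B}, the same conclusion follows by hand: interior elliptic estimates (Schauder or $L^p$) together with a bootstrap turn local boundedness into uniform $C^k$ bounds on every compact set, whence the \emph{Arzel\`a--Ascoli} theorem and a diagonal argument over an exhaustion of $S$ by compacta extract a subsequence converging in $C^2$ uniformly on compacta; because $\mathcal{L}^{\beta}$ has continuous coefficients, the equation $\mathcal{L}^{\beta}F_j=0$ passes to the limit and $F$ is $\mathcal{L}^{\beta}$-harmonic. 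I would nonetheless present the proof through Bar's theorem as the most economical route.
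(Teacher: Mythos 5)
Your argument is correct and coincides with the paper's own justification: the authors likewise observe from (\ref{diflandlbeta}) that $\mathcal{L}^{\beta}$ differs from the Laplace--Beltrami operator $\mathcal{L}$ only by a first-order term, hence is elliptic (and so hypoelliptic), and then invoke \cite[Theorem 4]{B} to obtain the normally convergent subsequence with $\mathcal{L}^{\beta}$-harmonic limit. Your additional sketch via interior elliptic estimates and Arzel\`a--Ascoli is a valid fallback but is not needed, exactly as you anticipate.
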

We recall that if $\beta>0$, and $\mu\in M_{\beta}$ is positive then $\mathcal{P}_{i\beta}[\mu]$ is a positive eigenfunction of the Laplace-Beltrami operator $\mathcal{L}$ with eigenvalue $\beta^2-\rho^2$. Characterization of such positive eigenfunctions was proved by Damek and Ricci in \cite[Theorem 7.11]{DR}.
\begin{lemma}\label{positiveeigen}
Suppose $u$ is a positive eigenfunction of the Laplace-Beltrami operator $\mathcal{L}$ on the Harmonic $NA$ group $S$ with eigenvalue $\beta^2-\rho^2$, for some $\beta>0$. Then there exists a unique positive measure $\mu$ on $N$ and a unique nonnegative constant $C$ such that
\begin{equation}\label{repofu}
u(n,a)=Ca^{\beta+\rho}+\mathcal{P}_{i\beta}[\mu](n,a),\:\:\:\:\text{for all}\:\:(n,a)\in S.
\end{equation}
In this case, the measure $\mu$ is called the boundary measure of $u$. 
\end{lemma}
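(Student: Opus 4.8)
The plan is to pass to the elliptic picture supplied by Lemma~\ref{lbetaharmonic}, build the representing measure as a weak* limit of boundary slices, and peel off the exceptional term $Ca^{\beta+\rho}$ at the very end. First I would set
\begin{equation*}
F(n,a)=a^{\beta-\rho}u(n,a),\qquad (n,a)\in S.
\end{equation*}
By Lemma~\ref{lbetaharmonic} this is a \emph{positive} $\mathcal{L}^{\beta}$-harmonic function, and by \eqref{plamdaandqlamda} the desired identity \eqref{repofu} is equivalent to $F(n,a)=Ca^{2\beta}+\mathcal{Q}_{i\beta}[\mu](n,a)$. A direct computation, using that $\mathcal{L}_a$ annihilates functions of $a$ alone (see \eqref{la}), shows that $a^{2\beta}$ is itself $\mathcal{L}^{\beta}$-harmonic; thus the task is to decompose an arbitrary positive $\mathcal{L}^{\beta}$-harmonic function into the $\mathcal{Q}_{i\beta}$-integral of a boundary measure plus a nonnegative multiple of this distinguished solution.

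Next I would manufacture $\mu$. For each $t>0$ put $d\mu_t(n)=F(n,t)\,dm(n)$, a positive measure on $N$. The decisive a priori estimate is that $\mu_t\bigl(B(\underline 0,R)\bigr)$ stays bounded as $t\to 0$ for each fixed $R>0$. I would obtain this, in the spirit of the classical Herglotz argument, from the maximum principle for the elliptic operator $\mathcal{L}^{\beta}$ (which differs from $\mathcal{L}$ only by the first-order term in \eqref{diflandlbeta}): $F$ dominates the $\mathcal{Q}_{i\beta}$-integral of its own slice, and evaluating at a fixed interior point $(n_0,a_0)$ and using the strict positivity of $q_{a_0}^{i\beta}$ on $B(\underline 0,R)$ (see \eqref{qbetaexpression}) bounds the mass by $F(n_0,a_0)$ independently of $t$. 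With this bound $\{\mu_t\}$ is weak* precompact by Banach--Alaoglu, so I would extract $t_j\to 0$ with $\mu_{t_j}$ converging to some positive measure $\mu$ in weak*; lower semicontinuity against the nonnegative kernel $q^{i\beta}_a$ then gives $\mathcal{Q}_{i\beta}[\mu]\le F$, so that $\mu\in M_{\beta}$. (The family $\{\mathcal{Q}_{i\beta}[\mu_{t_j}]\}$ is locally bounded by $F$, so Lemma~\ref{montelna} guarantees its limit is $\mathcal{L}^{\beta}$-harmonic, and Lemma~\ref{normalna} identifies that limit as $\mathcal{Q}_{i\beta}[\mu]$.)

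It then remains to identify $F-\mathcal{Q}_{i\beta}[\mu]$ with $Ca^{2\beta}$ for some $C\ge 0$. This is the crux, and is precisely the Martin-boundary content of \cite[Theorem~7.11]{DR}: the difference is a nonnegative $\mathcal{L}^{\beta}$-harmonic function whose boundary slices converge to $0$ in weak*, and the only such functions are the nonnegative multiples of the single minimal $\mathcal{L}^{\beta}$-harmonic function attached to the point at infinity, namely $a^{2\beta}$. I expect this step---ruling out any further positive $\mathcal{L}^{\beta}$-harmonic function with vanishing boundary trace---to be the main obstacle; the remaining effort is bookkeeping to match the normalisation of \cite{DR} to the constant $c_\beta$ appearing in \eqref{qbetaexpression}.

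Finally, uniqueness follows by letting $a\to 0$. Since $\{q^{i\beta}_a\}$ is an approximate identity, testing \eqref{repofu} against $f\in C_c(N)$ and combining Lemma~\ref{fubinina} with Lemma~\ref{unifna} shows that $\mathcal{Q}_{i\beta}[\mu](\cdot,a)$ converges to $\mu$ in weak* while $a^{2\beta}\to 0$; hence two representations $Ca^{2\beta}+\mathcal{Q}_{i\beta}[\mu]=C'a^{2\beta}+\mathcal{Q}_{i\beta}[\mu']$ force $\mu=\mu'$, and then subtracting the equal Poisson integrals forces $C=C'$. This yields both the existence and the uniqueness asserted in the statement.
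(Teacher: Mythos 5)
The first thing to note is that the paper does not prove this lemma at all: it is quoted verbatim from Damek and Ricci \cite[Theorem 7.11]{DR}, so there is no internal proof to compare yours against. Your proposal is therefore an attempt to reprove the cited theorem, and it contains a genuine gap that you yourself flag: the identification of $F-\mathcal{Q}_{i\beta}[\mu]$ with $Ca^{2\beta}$ --- i.e.\ the statement that the only positive $\mathcal{L}^{\beta}$-harmonic functions with vanishing boundary trace are the nonnegative multiples of $a^{2\beta}$ --- is precisely the hard content of \cite[Theorem 7.11]{DR}. Deferring that step to the very reference the lemma is quoting makes the argument circular rather than self-contained; everything surrounding it (the reduction via Lemma \ref{lbetaharmonic} and \eqref{plamdaandqlamda}, the correct observation that $\mathcal{L}^{\beta}(a^{2\beta})=0$ since $\mathcal{L}_a$ in \eqref{la} has no $\partial_a$-term, and the use of Lemmas \ref{montelna} and \ref{normalna} to identify the weak* limit) is bookkeeping around the one step that actually needs proving.

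There is a second, less visible gap in the a priori mass bound. The inequality ``$F$ dominates the $\mathcal{Q}_{i\beta}$-integral of its own slice'' is the Euclidean Herglotz step $u(x,y+t)=\int P_y(x-z)\,u(z,t)\,dz$, which rests on two ingredients that are not available off the shelf on $S$: the convolution semigroup property of the Poisson kernel in the height variable (the kernels $q^{i\beta}_a$ of \eqref{qbetaexpression} do not satisfy $q^{i\beta}_a\ast q^{i\beta}_b=q^{i\beta}_{a+b}$, and the region $\{a>t\}$ is not a left translate of $S$, so one cannot simply invoke left-invariance of $\mathcal{L}^{\beta}$), and a Phragm\'en--Lindel\"of-type maximum principle on that unbounded region valid for solutions assumed only to be positive. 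Without these, the uniform bound on $\mu_t\bigl(B(\underline{0},R)\bigr)$ that drives the weak* compactness argument is unjustified, and this is one of the reasons the proof in \cite{DR} proceeds differently (via the structure of minimal positive eigenfunctions rather than by slicing). Your uniqueness argument at the end --- testing \eqref{repofu} against $f\in C_c(N)$ and using Lemmas \ref{fubinina} and \ref{unifna} together with $a^{2\beta}\to 0$ --- is correct and is the standard way to recover $\mu$ and $C$ from $u$.
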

\begin{remark}
The result of Damek and Ricci is valid for all positive eigenfunctions of $\mathcal L$, namely, positive eigenfunctions with eigenvalue $\beta^2-\rho^2$, $\beta\in\R$. However, our results do not apply when $\beta\in (-\infty, 0]$.
\end{remark}
Next, we consider the natural action of the subgroup $A$ on $S$ (see (\ref{nonisotropic})):
\begin{equation}\label{aaction}
r\cdot (n,a)=(\delta_r(n),ra),\:\:\:\:r\in A,\:(n,a)\in S.
\end{equation}
\begin{remark}\label{invarianceofadmissible}
We note that for each $\alpha>0$, the admissible domain $\Gamma_{\alpha}(\underline{0})$ is invariant under this action. Indeed, if $(n,a)\in\Gamma_{\alpha}(\underline{0})$, then for every $r>0$,
\begin{equation*}
d(\delta_r(n))=rd(n)<ra.
\end{equation*} 
\end{remark}
Given a function $F$ on $S$ and $r>0$, we define the dilation $F_r$ of $F$ by
\begin{equation*}
F_r(n,a):=F\left(\delta_r(n),ra\right),\:\:\:\:(n,a)\in S.
\end{equation*}
Given a measure $\nu$ on $N$ and $r>0$, we also define the dilate $\nu_r$ of $\nu$ by
\begin{equation}\label{dilatemna}
\nu_r(E)=r^{-Q}\nu\left(\delta_r(E)\right),
\end{equation}
for every Borel set $E\subseteq N$. 
These notions of dilation are crucial for us primarily because of the following two reasons, which we present in the next two lemmas.
\begin{lemma}\label{dilationoffunc}
Let $\beta>0$. If $F$ is an $\mathcal{L}^{\beta}$-harmonic functions on $S$ then so is $F_r$, for every $r>0$.
\end{lemma}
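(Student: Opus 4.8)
The plan is to recognise the dilation $F\mapsto F_r$ as precomposition with a left translation on $S$, and then to invoke the left $S$-invariance of $\mathcal{L}^{\beta}$ recorded earlier in the text.

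First I would observe that the action (\ref{aaction}) is itself left translation by an element of $S$. Indeed, taking the element $(\underline{0},r)=(0,0,r)\in S$ and using the multiplication law on $S$, one computes
\begin{equation*}
(\underline{0},r)(X,Z,a)=\left(\sqrt{r}\,X,\,rZ,\,ra\right)=\left(\delta_r(X,Z),ra\right)=r\cdot(n,a),
\end{equation*}
where the middle equality uses the definition (\ref{nonisotropic}) of the nonisotropic dilation $\delta_r$ (note that $[0,X]=0$ kills the central correction term). Consequently, writing $\ell_g(x)=gx$ for left translation by $g\in S$, we have the clean identity $F_r=F\circ\ell_{(\underline{0},r)}$.

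Next I would recall that $\mathcal{L}^{\beta}$ is left $S$-invariant. This was already established in the text: by (\ref{diflandlbeta}) we have $\mathcal{L}^{\beta}=\mathcal{L}+2(\rho-\beta)E_0$, and since $\mathcal{L}=\sum_{l}E_l^2-QE_0$ and $E_0$ are both built from the left-invariant vector fields $E_l$, the operator $\mathcal{L}^{\beta}$ commutes with every left translation, that is,
\begin{equation*}
\mathcal{L}^{\beta}(G\circ\ell_g)=(\mathcal{L}^{\beta}G)\circ\ell_g,\qquad g\in S,
\end{equation*}
for every smooth function $G$ on $S$.

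Finally, applying this identity with $G=F$ and $g=(\underline{0},r)$, and using that $\mathcal{L}^{\beta}F=0$ by hypothesis, we obtain
\begin{equation*}
\mathcal{L}^{\beta}F_r=\mathcal{L}^{\beta}\left(F\circ\ell_{(\underline{0},r)}\right)=(\mathcal{L}^{\beta}F)\circ\ell_{(\underline{0},r)}=0,
\end{equation*}
so that $F_r$ is $\mathcal{L}^{\beta}$-harmonic. The only point needing genuine care is the group-law computation identifying $r\cdot(n,a)$ with left multiplication by $(\underline{0},r)$; once this is in place the conclusion is immediate from left-invariance, and the messier alternative of differentiating $F_r$ directly through the coordinate expression (\ref{laplacebeltramionna})--(\ref{la}) of $\mathcal{L}^{\beta}$ via the chain rule can be avoided entirely.
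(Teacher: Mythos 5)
Your proof is correct and is essentially the paper's own argument: the paper likewise observes that $F_r$ is the left translation of $F$ by $(\underline{0},r)\in S$ and concludes immediately from the left $S$-invariance of $\mathcal{L}^{\beta}$. Your explicit verification of the group-law identity $(\underline{0},r)(X,Z,a)=(\delta_r(X,Z),ra)$ is a welcome detail the paper leaves implicit, but the route is the same.
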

\begin{proof}
Observe that $F_r$ is the left translation of $F$ by $(\underline{0},r)\in S$. The proof now  follows trivially as $\mathcal{L}^{\beta}$ is left $S$-invariant.
\end{proof}
\begin{lemma}\label{dilateofmeasure}
Let $\beta>0$. If $\nu\in M_{\beta}$, then for each $r>0$, \begin{equation*}
\mathcal{Q}_{i\beta}[\nu_r](n,a)=\mathcal{Q}_{i\beta}[\nu](\delta_r(n),ra),\:\:\:\:\text{for all}\:\:\:(n,a)\in S.
\end{equation*}
\end{lemma}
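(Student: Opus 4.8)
The plan is to compute $\mathcal{Q}_{i\beta}[\nu_r](n,a)$ directly from the convolution formula (\ref{qlamconvmu}), convert the integral against the dilate $\nu_r$ into an integral against $\nu$ by a change of variables, and then exploit the fact that each $\delta_r$ is a group automorphism of $N$ satisfying $\delta_r\circ\delta_s=\delta_{rs}$ to recognise the resulting integrand as exactly the one appearing in $\mathcal{Q}_{i\beta}[\nu](\delta_r(n),ra)$.

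First I would record the change of variables formula associated with the dilate measure (\ref{dilatemna}): for every nonnegative Borel measurable $g$ on $N$,
\begin{equation*}
\int_N g(n_1)\,d\nu_r(n_1)=r^{-Q}\int_N g\left(\delta_{r^{-1}}(n_1)\right)\,d\nu(n_1).
\end{equation*}
This is immediate for $g=\mathbf{1}_E$ from the definition $\nu_r(E)=r^{-Q}\nu(\delta_r(E))$ together with the identity $\mathbf{1}_{\delta_r(E)}(n_1)=\mathbf{1}_E(\delta_{r^{-1}}(n_1))$, and it extends to nonnegative measurable $g$ by linearity and the monotone convergence theorem, and thereafter to $|\nu|$-integrable $g$. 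Since $\nu\in M_{\beta}$, the defining integral for $\mathcal{Q}_{i\beta}[\nu]$ converges absolutely against $|\nu|$, so all the integrals below are finite and the formula may be applied to the signed (or complex) measure $\nu$.

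Applying this with $g(n_1)=q^{i\beta}\left(\delta_{a^{-1}}(n_1^{-1}n)\right)$, the definition (\ref{qlamconvmu}) gives
\begin{equation*}
\mathcal{Q}_{i\beta}[\nu_r](n,a)=a^{-Q}r^{-Q}\int_N q^{i\beta}\left(\delta_{a^{-1}}\left(\left(\delta_{r^{-1}}(n_1)\right)^{-1}n\right)\right)\,d\nu(n_1).
\end{equation*}
Next I would simplify the argument of $q^{i\beta}$. Because $\delta_{r^{-1}}$ is a group automorphism, $\left(\delta_{r^{-1}}(n_1)\right)^{-1}=\delta_{r^{-1}}(n_1^{-1})$, and since $\delta_{r^{-1}}(\delta_r(n))=n$ and $\delta_{r^{-1}}$ preserves products, we get
\begin{equation*}
\delta_{r^{-1}}(n_1^{-1})\,n=\delta_{r^{-1}}\left(n_1^{-1}\,\delta_r(n)\right).
\end{equation*}
Composing the dilations via $\delta_{a^{-1}}\circ\delta_{r^{-1}}=\delta_{(ra)^{-1}}$ then yields
\begin{equation*}
\delta_{a^{-1}}\left(\delta_{r^{-1}}(n_1^{-1})\,n\right)=\delta_{(ra)^{-1}}\left(n_1^{-1}\,\delta_r(n)\right).
\end{equation*}
Substituting this and using $a^{-Q}r^{-Q}=(ra)^{-Q}$, the right-hand side becomes precisely $\mathcal{Q}_{i\beta}[\nu](\delta_r(n),ra)$ by (\ref{qlamconvmu}), which is the claimed identity.

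The only genuinely delicate point is the justification of the change of variables for a signed or complex $\nu$ together with the absolute convergence of the integrals involved; both are settled by first proving the formula for the positive measure $|\nu|$ and invoking $\nu\in M_{\beta}$. The remaining content is the purely algebraic bookkeeping with the dilations $\delta_r$, which is routine once one uses that they form a one-parameter group of automorphisms of $N$.
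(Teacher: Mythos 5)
Your proposal is correct and follows essentially the same route as the paper: establish the change-of-variables identity $\int_N g\,d\nu_r=r^{-Q}\int_N g\circ\delta_{r^{-1}}\,d\nu$ from the definition of $\nu_r$, apply it to the integrand of (\ref{qlamconvmu}), and then use that $\delta_{r^{-1}}$ is an automorphism together with $\delta_{a^{-1}}\circ\delta_{r^{-1}}=\delta_{(ra)^{-1}}$ and $a^{-Q}r^{-Q}=(ra)^{-Q}$ to identify the result as $\mathcal{Q}_{i\beta}[\nu](\delta_r(n),ra)$. The only difference is that you spell out the measure-theoretic justification slightly more carefully than the paper does; the substance is identical.
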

\begin{proof}
For $E\subseteq N$ a Borel set, it follows from the definition of $\nu_r$ (\ref{dilatemna}) that
\begin{equation*}
\int_{N}\chi_E\:d\nu_r=r^{-Q}\nu\left(\delta_r(E)\right)=r^{-Q}\int_{N}\chi_{\delta_r(E)}(n)\:d\nu(n)=r^{-Q}\int_{N}\chi_{E}\left(\delta_{r^{-1}}(n)\right)\:d\nu(n).
\end{equation*}
Hence, for all nonnegative measurable functions $f$ on $N$ we have  
\begin{equation*}
\int_{N}f(n)\:d\nu_r(n)=r^{-Q}\int_{N}f\left(\delta_{r^{-1}}(n)\right)\:d\nu(n).
\end{equation*}
Thus, from the definition of $\mathcal{Q}_{i\beta}$ (see \ref{qlamconvmu}) we get that for all $(n,a)\in S$,
\begin{eqnarray*}
\mathcal{Q}_{i\beta}[\nu_r](n,a)&=&a^{-Q}\int_{N}q^{i\beta}\left(\delta_{a^{-1}}(n_1^{-1}n)\right)\:d\nu_r(n_1)\\&=&a^{-Q}r^{-Q}\int_{N}q^{i\beta}\left(\delta_{a^{-1}}\left(\delta_{r^{-1}}(n_1^{-1})n\right)\right)\:d\nu(n_1)
\\&=&(ra)^{-Q}\int_{N}q^{i\beta}\left(\delta_{a^{-1}}\left(\delta_{r^{-1}}\left(n_1^{-1}\delta_r(n)\right)\right)\right)\:d\nu(n_1)\\&=&(ra)^{-Q}\int_{N}q^{i\beta}\left(\delta_{(ra)^{-1}}\left(n_1^{-1}\delta_r(n)\right)\right)\:d\nu(n_1)\\&=&\mathcal{Q}_{i\beta}[\nu](\delta_r(n),ra).
\end{eqnarray*}
This completes the proof.
\end{proof}
\section{Main theorem}
We shall first prove a special case of our main result. The proof of the main result will follow by reducing matters to this special case.
\begin{theorem}\label{specialthna}
Suppose that $u$ is a positive eigenfunction of $\mathcal{L}$ on $S$ with eigenvalue $\beta^2-\rho^2$, where $\beta>0$, and that $L\in[0,\infty)$. If the boundary measure $\mu$ of $u$ is finite then the following statements hold.
\begin{enumerate}
\item[(i)]If there exists $\theta>0$, such that
\begin{equation}\label{etadmissible}
\lim_{\substack{a\to 0\\(n,a)\in \Gamma_{\theta}(\underline{0})}}a^{\beta-\rho}u(n,a)=L,
\end{equation}
then $D\mu(\underline{0})=L$.
\item[(ii)]If $D\mu(\underline{0})=L$, then the function $(n,a)\mapsto a^{\beta-\rho}u(n,a)$ has admissible limit $L$ at $\underline{0}$.
\end{enumerate}
\end{theorem}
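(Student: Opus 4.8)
The plan is to reduce both parts to the single $\mathcal{L}^{\beta}$-harmonic function $F(n,a)=a^{\beta-\rho}u(n,a)$. By Lemma \ref{positiveeigen} and (\ref{plamdaandqlamda}) with $\lambda=i\beta$ we have $u(n,a)=Ca^{\beta+\rho}+a^{\rho-\beta}\mathcal{Q}_{i\beta}[\mu](n,a)$, hence $F(n,a)=Ca^{2\beta}+\mathcal{Q}_{i\beta}[\mu](n,a)$; since $\beta>0$ the term $Ca^{2\beta}$ vanishes in every $a\to 0$ limit and plays no role. By Lemma \ref{lbetaharmonic} $F$ is $\mathcal{L}^{\beta}$-harmonic, and by Lemma \ref{dilationoffunc} so is each dilate $F_r$; combining Lemma \ref{dilateofmeasure} with the definition (\ref{dilatemna}) gives $F_r(n,a)=Cr^{2\beta}a^{2\beta}+\mathcal{Q}_{i\beta}[\mu_r](n,a)$. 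Two elementary facts drive the argument: first, $\mathcal{Q}_{i\beta}[Lm]\equiv L$, because $\{q^{i\beta}_a\}$ is an approximate identity of total mass one; second, after the substitution $\mu(\delta_r(B))/m(\delta_r(B))=\mu_r(B)/m(B)$, the statement $D\mu(\underline{0})=L$ is precisely the assertion that $\mu_r(B)\to Lm(B)$ for every $d$-ball $B$. I would also record that under the hypotheses of either part $M_{HL}(\mu)(\underline{0})<\infty$: in (i) this follows from the left-most inequality of Lemma \ref{maximalna} and the boundedness of $\mathcal{Q}_{i\beta}[\mu](\underline{0},a)$ for small $a$; in (ii) it follows from $D\mu(\underline{0})=L$ (bounding $\mu(B(\underline{0},s))/m(B(\underline{0},s))$ near $s=0$) and the finiteness of $\mu$ (bounding it for large $s$). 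Since $M_{HL}(\cdot)(\underline{0})$ is dilation invariant, the right-most inequality of Lemma \ref{maximalna} then shows $\{F_r\}_{0<r\le 1}$ is locally bounded on $S$, which is the hypothesis of the Montel-type Lemma \ref{montelna}.

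For part (i), I would fix any $r_j\to 0$ and use Lemma \ref{montelna} to pass to a subsequence along which $F_{r_{j_k}}$ converges normally to an $\mathcal{L}^{\beta}$-harmonic function $G$. The cone $\Gamma_{\theta}(\underline{0})$ is $A$-invariant (Remark \ref{invarianceofadmissible}), so for $(n,a)\in\Gamma_{\theta}(\underline{0})$ the point $(\delta_r(n),ra)$ again lies in $\Gamma_{\theta}(\underline{0})$ with height $ra\to 0$; hypothesis (\ref{etadmissible}) forces $F_{r_{j_k}}(n,a)=F(\delta_{r_{j_k}}(n),r_{j_k}a)\to L$ pointwise on $\Gamma_{\theta}(\underline{0})$, whence $G\equiv L$ on this open set. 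Because $\mathcal{L}^{\beta}$ is elliptic with real-analytic coefficients, $G$ is real-analytic, so $G\equiv L$ on the connected manifold $S$. Consequently $\mathcal{Q}_{i\beta}[\mu_{r_{j_k}}]\to L=\mathcal{Q}_{i\beta}[Lm]$ normally, Lemma \ref{normalna} gives $\mu_{r_{j_k}}\to Lm$ in weak*, and Lemma \ref{mthna} upgrades this to $\mu_{r_{j_k}}(B)\to Lm(B)$ for every $d$-ball $B$. As this holds along a subsequence of an arbitrary $r_j\to 0$, the subsequence principle yields $\mu_r(B)\to Lm(B)$ for every $B$, i.e. $D\mu(\underline{0})=L$.

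For part (ii), I would first show $\mu_r\to Lm$ in weak* as $r\to 0$: the uniform bound $\mu_r(B(\underline{0},s))\le M_{HL}(\mu)(\underline{0})\,m(B(\underline{0},s))$ gives weak* sequential precompactness, any weak* limit of a subsequence is forced to be $Lm$ by Lemma \ref{mthna} (since $\mu_r(B)\to Lm(B)$ by hypothesis), and the subsequence principle finishes. Next I would prove $F_r\to L$ normally: given $r_j\to 0$, Lemma \ref{montelna} produces a subsequence with $F_{r_{j_k}}\to G$ normally, and I would identify $G$ by testing against $f\in C_c(N)$. By Lemma \ref{fubinina}, $\int_N\mathcal{Q}_{i\beta}[\mu_{r_{j_k}}](\cdot,a)f\,dm=\int_N\mathcal{Q}_{i\beta}f(\cdot,a)\,d\mu_{r_{j_k}}$; the left side tends to $\int_N G(\cdot,a)f\,dm$ by normal convergence, while the right side tends to $L\int_N\mathcal{Q}_{i\beta}f(\cdot,a)\,dm=L\int_N f\,dm$, the weak* limit $Lm$ handling a compact core and a dyadic tail estimate (controlled uniformly by $M_{HL}(\mu)(\underline{0})$ via the decay of $\mathcal{Q}_{i\beta}f$) handling the rest. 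Thus $\int_N G(\cdot,a)f\,dm=L\int_N f\,dm$ for all $f$ and all $a$, so $G\equiv L$, and the subsequence principle gives $F_r\to L$ normally. Finally, for $(n,a)\in\Gamma_{\alpha}(\underline{0})$ with $a\to 0$, setting $\xi=\delta_{1/a}(n)\in\overline{B(\underline{0},\alpha)}$ we have $a^{\beta-\rho}u(n,a)=F_a(\xi,1)$, and the uniform convergence of $F_a$ to $L$ on the compact set $\overline{B(\underline{0},\alpha)}\times\{1\}$ yields the admissible limit $L$.

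The main obstacle is the identification of each Montel limit $G$ with the constant $L$. In part (i) this rests on unique continuation for $\mathcal{L}^{\beta}$: one must promote the equality $G=L$ on the open cone to all of $S$, which is exactly where ellipticity and analyticity of $\mathcal{L}^{\beta}$ enter and where care is needed to treat $G$ as a genuine global solution. In part (ii) the delicate point is that the natural test kernel $\mathcal{Q}_{i\beta}f(\cdot,a)$ lies only in $C_0(N)$, not in $C_c(N)$, so the weak* convergence $\mu_r\to Lm$ cannot be applied directly; the resolution is the uniform tail bound furnished by the finiteness of $M_{HL}(\mu)(\underline{0})$, which is precisely the part played by the maximal-function comparison of Lemma \ref{maximalna}.
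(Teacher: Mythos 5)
Your proposal is correct, and part (i) follows the paper's argument essentially verbatim: dilate $F$, use the $A$-invariance of the admissible cones together with Lemma \ref{maximalna} to get local boundedness, extract a normal limit via Lemma \ref{montelna}, identify it as the constant $L$ on $\Gamma_{\theta}(\underline{0})$ and then globally by real-analyticity, and transfer back to the measures via Lemmas \ref{normalna} and \ref{mthna}. (One small point: to conclude $M_{HL}(\mu)(\underline{0})<\infty$ in part (i) you also need the large-$a$ bound on $\mathcal{Q}_{i\beta}[\mu](\underline{0},a)$ coming from the finiteness of $\mu$, exactly as you spell out for part (ii).)

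Where you genuinely diverge is in the identification of the Montel limit in part (ii). The paper applies the Damek--Ricci representation theorem (Lemma \ref{positiveeigen}) to the positive eigenfunction $a^{\rho-\beta}g$ to write $g=C'a^{2\beta}+\mathcal{Q}_{i\beta}[\nu]$, eliminates $C'$ using the boundedness of $g(\underline{0},\cdot)$, and only then invokes Lemma \ref{normalna} to convert the normal convergence $\mathcal{Q}_{i\beta}[\mu_{a_{j_k}}]\to\mathcal{Q}_{i\beta}[\nu]$ into weak* convergence, after which the hypothesis and Lemma \ref{mthna} force $\nu=Lm$. You instead establish $\mu_r\to Lm$ in weak* directly from $D\mu(\underline{0})=L$ (precompactness from the uniform ball bound $\mu_r(B(\underline{0},s))\le M_{HL}(\mu)(\underline{0})\,m(B(\underline{0},s))$, plus Lemma \ref{mthna} to pin down every subsequential limit) and then identify $G$ by testing against $C_c(N)$ via Lemma \ref{fubinina}, paying for the non-compact support of $\mathcal{Q}_{i\beta}f(\cdot,a)$ with a dyadic tail estimate controlled by $M_{HL}(\mu)(\underline{0})$ and the decay $\left|\mathcal{Q}_{i\beta}f(n,a)\right|\le C_{a,f}\,d(n)^{-Q-2\beta}$ (the same estimate that drives Lemma \ref{unifna}). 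Both routes work: the paper's leans on the structure theorem so that Lemma \ref{normalna} can be used as a black box, while yours avoids re-invoking the representation theorem for the limit function at the cost of the extra, routine tail estimate. Your final step --- reading off the admissible limit from the uniform convergence of $F_a$ on the compact set $\overline{B}(\underline{0},\alpha)\times\{1\}$ --- is the same device the paper uses, applied to the full family $\{F_a\}$ rather than to a fixed sequence $(n_j,a_j)$ with convergent values.
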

\begin{proof}
We first prove (i). We choose a $d$-ball $B_0\subset N$, a sequence of positive numbers $\{r_j\}_{j\in\N}$ converging to zero and consider the quotient
\begin{equation*}
L_j=\frac{\mu\left(\delta_{r_j}(B_0)\right)}{m\left(\delta_{r_j}(B_0)\right)},\:\:j\in\N.
\end{equation*}
Assuming (\ref{etadmissible}), we will prove that $\{L_j\}$ is a bounded sequence and every convergent subsequence of $\{L_j\}$ converges to $L$. We first choose a positive number $s$ such that $B_0$ is contained in the $d$-ball $B(\underline{0},s)$. Then, using (\ref{measuredilation}), we get for all $j\in\N$, 
\begin{equation}\label{ljna}
L_j\leq \frac{\mu\left(\delta_{r_j}(B(\underline{0},s))\right)}{m\left(\delta_{r_j}(B_0)\right)}=\frac{\mu\left(\delta_{r_j}(B(\underline{0},s))\right)}{m\left(\delta_{r_j}(B(\underline{0},s))\right)}\times\frac{m(B(\underline{0},s))}{m(B_0)}\leq \frac{m(B(\underline{0},s))}{m(B_0)}M_{HL}(\mu)(\underline{0}).
\end{equation}
We first need to show that $M_{HL}(\mu)(\underline{0})$ is finite. Since $\mu$ is the boundary measure for $u$ we have from (\ref{repofu}) and the relation between $\mathcal {P}_{i\beta}$ and $\mathcal{Q}_{i\beta}$ given in (\ref{plamdaandqlamda}) that
\begin{equation}\label{representation}
a^{\beta-\rho}u(n,a)=Ca^{2\beta}+\mathcal{Q}_{i\beta}[\mu](n,a),\:\:\:\:\text{for all}\:\:\:(n,a)\in S,
\end{equation}
for some positive constant $C$. Since we are interested in the limit as $a$ tends to zero, we may and do assume that $C$ is zero. Therefore, we can rewrite (\ref{etadmissible}) as 
\begin{equation*}
\lim_{\substack{a\to 0\\(n,a)\in \Gamma_{\theta}(\underline{0})}}\mathcal{Q}_{i\beta}[\mu](n,a)=L.
\end{equation*}
This implies, in particular, that 
\begin{equation*}
\lim_{a\to 0}\mathcal{Q}_{i\beta}[\mu](\underline{0},a)=L,
\end{equation*}
and hence there exists a positive number $\delta$ such that
\begin{equation*}
\sup_{0<a<\delta}\mathcal{Q}_{i\beta}[\mu](\underline{0},a)<\infty.
\end{equation*}
Since $\mu$ is a finite measure, using boundedness of the function $q^{i\beta}$ we also have that for all $a\geq\delta$,
\begin{equation*}
\mathcal{Q}_{i\beta}[\mu](\underline{0},a)=a^{-Q}\int_{N}q^{i\beta}\left(\delta_{a^{-1}}(n_1^{-1})\right)\:d\mu(n_1)\leq C'a^{-Q}\int_{N}\:d\mu(n_1)\leq C'\delta^{-Q}\mu(N).
\end{equation*}
Combining the above two inequalities, we obtain
\begin{equation*}
\sup_{a>0}\mathcal{Q}_{i\beta}[\mu](\underline{0},a)<\infty.
\end{equation*}Lemma \ref{maximalna} now implies that $M_{HL}(\mu)(\underline{0})$ is finite. Boundedness of the sequence $\{L_j\}$ is now a consequence of the inequality (\ref{ljna}). We now choose a convergent subsequence of $\{L_j\}$ and denote it also, for the sake of simplicity, by $\{L_j\}$. For $j\in\N$, we define a function $F_j$ on $S$ by
\begin{equation*}
F_j(n,a)=F\left(\delta_{r_j}(n),r_ja\right),\:\:\:\:\:\: (n,a)\in S,
\end{equation*}
where, as in Lemma \ref{lbetaharmonic},
\begin{equation*}
F(n,a)=a^{\beta-\rho}u(n,a)=\mathcal{Q}_{i\beta}[\mu](n,a).
\end{equation*}
Lemma \ref{lbetaharmonic} now implies that $F$ is $\mathcal{L}^{\beta}$-harmonic and hence by Lemma \ref{dilationoffunc}, $F_j$ is $\mathcal{L}^{\beta}$-harmonic for each $j\in \N$. We now claim that $\{F_j\}$ is locally bounded. To prove this claim, we choose a compact set  $K\subset S$. Then there exists a positive number $\alpha$ such that $K$ is contained in $\Gamma_{\alpha}(\underline{0})$. Indeed, we consider the map
\begin{equation*}
(n,a)\mapsto\frac{d(n)}{a},\:\:\:(n,a)\in K.
\end{equation*}
Using continuity of this map and compactness of $K$, we get some $\alpha>0$ such that
\begin{equation*}
\frac{d(n)}{a}<\alpha,\:\:\:\text{for all}\:\:\:(n,a)\in K,
\end{equation*} 
that is, $(n,a)\in \Gamma_{\alpha}(\underline{0})$. Using the invariance of $\Gamma_{\alpha}(\underline{0})$ under the action (\ref{aaction}) (see Remark \ref{invarianceofadmissible}) and Lemma \ref{maximalna}, we obtain that
\begin{equation*}
\sup_j\sup_{(n,a)\in \Gamma_{\alpha}(\underline{0})}F_j(n,a)\leq\sup_{(n,a)\in \Gamma_{\alpha}(\underline{0})}F(n,a)=\sup_{(n,a)\in \Gamma_{\alpha}(\underline{0})}\mathcal{Q}_{i\beta}[\mu](n,a)\leq c_{\alpha}M_{HL}(\mu)(\underline{0}).
\end{equation*}
Hence, in particular, $\{F_j\}$ is a locally bounded sequence of $\mathcal{L}^{\beta}$-harmonic functions on $S$. Applying Lemma \ref{montelna} (generalization of Montel's theorem), we extract a subsequence $\{F_{j_k}\}$ of ${F_j}$ which converges normally to a $\mathcal{L}^{\beta}$-harmonic function $g$ on $S$. We now show that $g$ is identically equal to $L$ in $\Gamma_{\theta}(\underline{0})$. To show this, we take $(n_0,a_0)\in \Gamma_{\theta}(\underline{0})$. Since $\{r_{j_k}\}$ converges to zero as $k$ goes to infinity and $F(n,a)=a^{\beta-\rho}u(n,a)$ has limit $L$, as $(n,a)$ tends to $(\underline{0},0)$ within $\Gamma_{\theta}(\underline{0})$,
\begin{equation*}
g(n_0,a_0)=\lim_{k\to\infty}F_{j_k}(n_0,a_0)=\lim_{k\to\infty}F\left(\delta_{r_{j_k}}(n_0),r_{j_k}a_0\right)=L.
\end{equation*}
So, $g$ is the constant function $L$ on $\Gamma_{\theta}(\underline{0})$. Since $\mathcal{L}^{\beta}$ is elliptic with real analytic coefficient, it follows that $g$ is real analytic. Since $\Gamma_{\theta}(\underline{0})$ is open in $S$, we get 
\begin{equation}\label{Flimit}
g(n,a)=L,\:\:\:\text{for all}\:\:\:(n,a)\in S.
\end{equation}
We now consider the dilate $\mu_{r_{j_k}}$ of $\mu$ according to (\ref{dilatemna}). By Lemma \ref{dilateofmeasure}, we have that
\begin{equation}\label{Fdilate}
F_{j_k}(n,a)=F\left(\delta_{r_{j_k}}(n),r_{j_k}a\right)=\mathcal{Q}_{i\beta}[\mu]\left(\delta_{r_{j_k}}(n),r_{j_k}a\right)=\mathcal{Q}_{i\beta}[\mu_{r_{j_k}}](n,a), 
\end{equation}
for all $(n,a)\in S$. It now follows from (\ref{Flimit}) and (\ref{Fdilate}) that $\mathcal{Q}_{i\beta}[\mu_{r_{j_k}}]$ converges normally to the constant function $L$ which is same as $\mathcal{Q}_{i\beta}[Lm]$. Lemma \ref{normalna} then implies that the sequence of positive measures $\{\mu_{r_{j_k}}\}$ converges to the positive measure $Lm$ in weak*. We then apply Lemma \ref{mthna} to conclude that $\{\mu_{r_{j_k}}(B)\}$ converges to $Lm(B)$ for every  $d$-ball $B\subset N$. Using this for $B=B_0$, we get that
\begin{equation*}
Lm(B_0)=\lim_{k\to \infty}\mu_{r_{j_k}}(B_0)=\lim_{k\to\infty}{r_{j_k}}^{-Q}\mu\left(\delta_{{r_{j_k}}}(B_0)\right)=m(B_0)\lim_{k\to\infty}\frac{\mu\left(\delta_{{r_{j_k}}}(B_0)\right)}{m\left(\delta_{{r_{j_k}}}(B_0)\right)}.
\end{equation*}
This implies that the sequence $\{L_{j_{k}}\}$ converges to $L$ and hence so does $\{L_j\}$, as $\{L_j\}$ is convergent. Thus, every convergent subsequence of  the bounded sequence $\{L_j\}$ converges to $L$. This implies that $\{L_j\}$ itself converges to $L$. Since $B_0$ and $\{r_j\}$ are arbitrary it follows that $\mu$ has strong derivative $L$ at $\underline{0}$. 

We now prove (ii). We suppose that $D\mu(\underline{0})$ is equal to $L$. Since the admissible limit of the function $(n,a)\mapsto a^{2\beta}$ is $0$, without loss of generality we assume as before that $C$ is zero in (\ref{representation}). We need to prove that the admissible limit of the function 
\begin{equation*}
F(n,a):=a^{\beta-\rho}u(n,a)=\mathcal{Q}_{i\beta}[\mu](n,a),\:\:(n,a)\in S,
\end{equation*}
at $\underline{0}$ is equal to $L$. We fix a positive number $\alpha$ and a sequence $\{(n_j,a_j)\mid j\in\N\}\subset \Gamma_{\alpha}(\underline{0})$ such that $\{a_j\}$ converges to zero. Since $D\mu(\underline{0})$ is $L$, it follows, in particular, that
\begin{equation*}
\lim_{r\to 0}\frac{\mu(B(\underline{0},r))}{m(B(\underline{0},r))}=L.
\end{equation*}
Therefore, there exists some positive constant $\delta$ such that 
\begin{equation*}
\sup_{0<r<\delta}\frac{\mu(B(\underline{0},r))}{m(B(\underline{0},r))}<L+1.
\end{equation*}
Finiteness of the measure $\mu$ implies that for all $r\geq\delta$,
\begin{equation*}
\frac{\mu(B(\underline{0},r))}{m(B(\underline{0},r))}\leq\frac{\mu(N)}{m(B(\underline{0},1))\delta^Q}.
\end{equation*}
The above two inequalities together with Lemma \ref{maximalna} implies that
\begin{equation*}
\sup_{(n,a)\in \Gamma_{\alpha}(\underline{0})}F(n,a)=\sup_{(n,a)\in \Gamma_{\alpha}(\underline{0})}\mathcal{Q}_{i\beta}[\mu](n,a)\leq C_{\alpha,\beta}M_{HL}(\mu)(\underline{0})<\infty.
\end{equation*}
In particular, $\{F(n_j,a_j)\}$ is a bounded sequnce. We consider a convergent subsequence of this sequence, denote it also, for the sake of simplicity, by $\{F(n_j,a_j)\}$ such that 
\begin{equation}\label{Flimitprime}
\lim_{j\to\infty}F(n_j,a_j)=L^{\prime}.
\end{equation}
It suffices to prove that $L'$ is equal to $L$. Using the sequence $\{a_j\}$, we define for each $j\in\N$, 
\begin{equation*}
F_j(n,a)=F\left(\delta_{a_j}(n),a_ja\right),\:\:\:(n,a)\in S.
\end{equation*}
As we have shown in the first part, we can prove that $\{F_j\}$ is a locally bounded sequence of $\mathcal{L}^{\beta}$-harmonic functions on $S$. Hence, by Lemma \ref{montelna}, there exists a subsequence $\{F_{j_k}\}$ of $\{F_j\}$ which converges normally to a positive $\mathcal{L}^{\beta}$-harmonic function $g$ on $S$. By defining 
\begin{equation*}
v(n,a):=a^{\rho-\beta}g(n,a),\:\:\:(n,a)\in S,
\end{equation*}
we get from Lemma \ref{dilationoffunc} that $v$ is a positive eigenfunction of $\mathcal{L}$ with eigenvalue $\beta^2-\rho^2$. Hence, by Lemma \ref{positiveeigen} and (\ref{plamdaandqlamda}), there exists a unique positive measure $\nu$ on $N$ and a unique nonnegative constant $C'$ such that
\begin{equation*}
v(n,a)=C'a^{\beta+\rho}+a^{\rho-\beta}\mathcal{Q}_{i\beta}[\nu](n,a),\:\:\:\:\text{for all}\:\:(n,a)\in S.
\end{equation*}This implies that \begin{equation}\label{repofg}
g(n,a)=C'a^{2\beta}+\mathcal{Q}_{i\beta}[\nu](n,a),\:\:\:\:\text{for all}\:\:(n,a)\in S.
\end{equation}
Applying Lemma \ref{maximalna} once again, we observe that
\begin{equation*}
\sup_{a>0}\sup_jF_j(\underline{0},a)\leq\sup_{a>0}F(\underline{0},a)=\mathcal{Q}_{i\beta}[\mu](\underline{0},a)\leq c_{\alpha}M_{HL}(\mu)(\underline{0})<\infty.
\end{equation*}
This shows that \begin{equation*}
\sup_{a>0}g(\underline{0},a)<\infty,
\end{equation*}an hence we must have $C'=0$ in (\ref{repofg}). Considering the dilate $\mu_{a_{j_k}}$ of $\mu$ according to (\ref{dilatemna}), we see by using Lemma \ref{dilateofmeasure} that for all $(n,a)\in S$
\begin{equation*}
F_{j_k}(n,a)=F\left(\delta_{a_{j_k}}(n),r_{j_k}a\right)=\mathcal{Q}_{i\beta}[\mu]\left(\delta_{a_{j_k}}(n),r_{j_k}a\right)=\mathcal{Q}_{i\beta}[\mu_{a_{j_k}}](n,a).
\end{equation*}Therefore, in view of (\ref{repofg}), we conclude that $\mathcal{Q}_{i\beta}[\mu_{a_{j_k}}]$ converges to $\mathcal{Q}_{i\beta}[\nu]$, normally on $S$. By Lemma \ref{normalna}, we thus obtain weak* convergence of $\{\mu_{j_k}\}$ to $\nu$. Since $D\mu(\underline{0})=L$, it follows that for any $d$-ball $B\subset N$,
\begin{equation*}
\lim_{k\to\infty}\mu_{j_k}(B)=\lim_{k\to\infty}{a_{j_k}}^{-Q}\mu(\delta_{{a_{j_k}}}(B))=\lim_{k\to\infty}\frac{\mu(\delta_{{a_{j_k}}}(B))}{m(\delta_{{a_{j_k}}}(B)}m(B)=Lm(B).
\end{equation*}
Hence by Lemma \ref{mthna}, $\nu=Lm$. As $g=\mathcal{Q}_{i\beta}[\nu]$, it follows that
\begin{equation*}
g(n,a)=L,\:\:\:\text{for all}\:\:\:(n,a)\in S.
\end{equation*}
This, in turn, implies that $\{F_{j_k}\}$ converges to the constant function $L$ normally on $S$. On the other hand, we note that
\begin{equation*}
F(n_{j_k},a_{j_k})=F\left(\delta_{{a_{j_k}}}\left(\delta_{{a^{-1}_{j_k}}}(n_{j_k})\right),a_{j_k}\right)=F_{j_{k}}\left(\delta_{{a^{-1}_{j_k}}}(n_{j_k}),1\right).
\end{equation*}
As $(n_{j_k},a_{j_k})$ belongs to the admissible region $\Gamma_{\alpha}(\underline{0})$, for all $k\in\N$, it follows that
\begin{equation*}
\left(\delta_{{a^{-1}_{j_k}}}(n_{j_k}),1\right)\in\overline B(\underline{0},\alpha)\times\{1\},
\end{equation*}
which is a compact subset of $S$. Therefore,
\begin{equation*}
\lim_{k\to\infty}F(n_{j_k},a_{j_k})=\lim_{k\to\infty}F_{j_{k}}\left(\delta_{{a^{-1}_{j_k}}}(n_{j_k}),1\right)=L,
\end{equation*}
as the convergence is uniform on $\overline B(\underline{0},\alpha)\times\{1\}$. In view of (\ref{Flimitprime}), we can thus conclude that $L'=L$. This completes the proof.
\end{proof}
We now state and prove our main result.
\begin{theorem}\label{mainthna}
Suppose that $u$ is a positive eigenfunction of $\mathcal{L}$ in $S$ with eigenvalue $\beta^2-\rho^2$, where $\beta>0$, and that $n_0\in N$, $L\in[0,\infty)$. If $\mu$ is the boundary measure of $u$ then the following statements hold.
\begin{enumerate}
\item[(i)]If there exists $\theta>0$, such that
\begin{equation*}
\lim_{\substack{a\to 0\\(n,a)\in \Gamma_{\theta}(n_0)}}a^{\beta-\rho}u(n,a)=L,
\end{equation*}
then $D\mu(n_0)=L$.
\item[(ii)]If $D\mu(n_0)=L$, then the function $(n,a)\mapsto a^{\beta-\rho}u(n,a)$ has admissible limit $L$ at $n_0$.
\end{enumerate}
\end{theorem}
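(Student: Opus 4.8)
The plan is to deduce the theorem from the special case $n_0=\underline{0}$ already proved in Theorem \ref{specialthna}. Two reductions are needed: a left translation carrying $n_0$ to $\underline{0}$, which exploits the left $S$-invariance of $\mathcal{L}$; and a localization of the boundary measure, which removes the finiteness hypothesis that Theorem \ref{specialthna} imposes but Theorem \ref{mainthna} does not.

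\emph{Translation.} Since the group law of $S$ gives $(n_0,1)(n,a)=(n_0n,a)$, the function $\tilde{u}(n,a):=u(n_0n,a)$ is the left translate of $u$ by $(n_0,1)$, hence again a positive eigenfunction of $\mathcal{L}$ with eigenvalue $\beta^2-\rho^2$. A change of variables in (\ref{qlamconvmu}) shows $\mathcal{Q}_{i\beta}[\mu](n_0n,a)=\mathcal{Q}_{i\beta}[\tilde{\mu}](n,a)$ for the translated measure $\tilde{\mu}(E)=\mu(n_0E)$, so by uniqueness in Lemma \ref{positiveeigen} the boundary measure of $\tilde{u}$ is $\tilde{\mu}$ (and $\tilde{\mu}\in M_{\beta}$). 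Two facts make this useful. Since $m$ is left invariant, $\tilde{\mu}(\delta_rB)/m(\delta_rB)=\mu(n_0\delta_rB)/m(n_0\delta_rB)$ for every $d$-ball $B$, so $D\tilde{\mu}(\underline{0})=L$ iff $D\mu(n_0)=L$; and since $(n,a)\in\Gamma_{\alpha}(\underline{0})$ exactly when $(n_0n,a)\in\Gamma_{\alpha}(n_0)$, the function $(n,a)\mapsto a^{\beta-\rho}u(n,a)$ has admissible limit $L$ at $n_0$ (resp.\ satisfies (\ref{etadmissible}) at $n_0$) iff $(n,a)\mapsto a^{\beta-\rho}\tilde{u}(n,a)$ does at $\underline{0}$. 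Thus it suffices to prove both implications at $\underline{0}$, but now \emph{without} assuming $\tilde{\mu}$ is finite.

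\emph{Localization.} Fix $R>1$ and split $\tilde{\mu}=\nu_1+\nu_2$ with $\nu_1=\tilde{\mu}|_{B(\underline{0},R)}$ and $\nu_2=\tilde{\mu}|_{B(\underline{0},R)^c}$. As $\overline{B(\underline{0},R)}$ is compact, $\nu_1$ is finite, while $\nu_2\le\tilde{\mu}\in M_{\beta}$ gives $\nu_1,\nu_2\in M_{\beta}$ and $\mathcal{Q}_{i\beta}[\tilde{\mu}]=\mathcal{Q}_{i\beta}[\nu_1]+\mathcal{Q}_{i\beta}[\nu_2]$. Any fixed $d$-ball $B$ satisfies $\delta_rB\subset B(\underline{0},R)$ for all small $r$ by (\ref{quasinorm}), so $\nu_2(\delta_rB)=0$ and $D\nu_2(\underline{0})=0$; hence $D\tilde{\mu}(\underline{0})=L\iff D\nu_1(\underline{0})=L$. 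The key claim is that $\mathcal{Q}_{i\beta}[\nu_2]$ has admissible limit $0$ at $\underline{0}$. Indeed, for $(n,a)\in\Gamma_{\alpha}(\underline{0})$ with $\alpha a<R/(2\tau)$ and $n_1\in B(\underline{0},R)^c$, the reverse quasi-triangle inequality (as in (\ref{reversetriangle}), using (\ref{quasinorm})) yields $d(n_1^{-1}n)\ge d(n_1)/(2\tau)$; discarding the middle term in (\ref{explicitqbetamu}),
\begin{equation*}
\mathcal{Q}_{i\beta}[\nu_2](n,a)\le c_{\beta}\,a^{2\beta}\int_{B(\underline{0},R)^c}\Bigl(16a^2+\tfrac{d(n_1)^2}{4\tau^2}\Bigr)^{-\rho-\beta}\,d\tilde{\mu}(n_1)\le c_{\beta}\,a^{2\beta}\int_{B(\underline{0},R)^c}\Bigl(\tfrac{d(n_1)^2}{4\tau^2}\Bigr)^{-\rho-\beta}\,d\tilde{\mu}(n_1).
\end{equation*}
The last integral is finite, since for $d(n_1)\ge R$ its integrand is at most $C_R\bigl(16+d(n_1)^2/4\tau^2\bigr)^{-\rho-\beta}$, which is $\tilde{\mu}$-integrable by Lemma \ref{integralfinite} applied with $a=1$. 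Hence the bound is $\le C_R\,a^{2\beta}\to0$ uniformly on $\Gamma_{\alpha}(\underline{0})$, proving the claim.

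\emph{Conclusion.} Write $a^{\beta-\rho}\tilde{u}=C'a^{2\beta}+\mathcal{Q}_{i\beta}[\nu_1]+\mathcal{Q}_{i\beta}[\nu_2]$, where both $C'a^{2\beta}$ and $\mathcal{Q}_{i\beta}[\nu_2]$ have admissible limit $0$ at $\underline{0}$; note $\mathcal{Q}_{i\beta}[\nu_1]=a^{\beta-\rho}u_1$ for the positive eigenfunction $u_1=\mathcal{P}_{i\beta}[\nu_1]$ with \emph{finite} boundary measure $\nu_1$, to which Theorem \ref{specialthna} applies. For (i): the transferred hypothesis gives $\lim_{(n,a)\in\Gamma_{\theta}(\underline{0})}\mathcal{Q}_{i\beta}[\nu_1](n,a)=L$, so Theorem \ref{specialthna}(i) yields $D\nu_1(\underline{0})=L$, whence $D\mu(n_0)=L$. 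For (ii): $D\mu(n_0)=L$ gives $D\nu_1(\underline{0})=L$, so Theorem \ref{specialthna}(ii) makes $\mathcal{Q}_{i\beta}[\nu_1]$ admissibly convergent to $L$ at $\underline{0}$; adding the two null terms shows $a^{\beta-\rho}\tilde{u}$, and hence $a^{\beta-\rho}u$, has admissible limit $L$ at $n_0$. I expect the tail estimate of Step (\emph{Localization})---controlling $\mathcal{Q}_{i\beta}[\nu_2]$ along admissible approach when $\mu$ is infinite---to be the main technical obstacle, the translation and the strong-derivative splitting being routine consequences of left invariance.
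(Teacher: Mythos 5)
Your proposal is correct and follows essentially the same route as the paper: a left translation reducing to $n_0=\underline{0}$, followed by restricting the boundary measure to a ball about $\underline{0}$ (the paper uses radius $\tau^{-1}$, you use an arbitrary $R>1$) and showing via the reverse quasi-triangle inequality and Lemma \ref{integralfinite} that the tail part of $\mathcal{Q}_{i\beta}[\mu]$ tends to $0$ uniformly on admissible domains, so that Theorem \ref{specialthna} applies. Your explicit splitting $\tilde{\mu}=\nu_1+\nu_2$ and the observation $D\nu_2(\underline{0})=0$ is just a slightly more structured phrasing of the paper's same reduction.
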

\begin{proof}
We consider the translated measure $\mu_0=\tau_{n_0}\mu$, where
\begin{equation*}
\tau_{n_0}\mu (E)=\mu(En_0^{-1} ),\:\:\:\text{for all Borel subsets}\:\:\:E\subseteq N.
\end{equation*}
Using translation invariance of $m$, it follows from the definition of strong derivative (Definition \ref{impdefnna}, ii)) that $D\mu_0(\underline{0})$ and $D\mu(n_0)$ are equal. As in the previous theorem, we may and do suppose that $C=0$ in (\ref{representation}). Thus, we can rewrite the representation formula (\ref{representation}) for $u$ as
\begin{equation*}
a^{\beta-\rho}u(n,a)=\mathcal{Q}_{i\beta}[\mu](n,a),\:\:\:(n,a)\in S.
\end{equation*}\label{transna}
Since $\mathcal{Q}_{i\beta}[\mu_0](\cdot,a)=\mu_0\ast q^{i\beta}_a$, $a\in A$, it follows that
\begin{equation*}
\mathcal{Q}_{i\beta}[\mu_0](n,a)=(\tau_{n_0}\mu)\ast q^{i\beta}_a(n)=\mu\ast q^{i\beta}_a(n_0n)=\mathcal{Q}_{i\beta}[\mu](n_0n,a),
\end{equation*}
for all $(n,a)\in S$. We fix an arbitrary positive number $\alpha$. Using the left-invariance of the quasi-metric $\bf{d}$, we have $(n,a)\in \Gamma_{\alpha}(\underline{0})$ if and only if $(n_0n,a)\in \Gamma_{\alpha}(n_0)$. Thus, we conclude from the last equation that
\begin{equation*}
\lim_{\substack{a\to 0\\(n,a)\in \Gamma_{\alpha}(\underline{0})}}\mathcal{Q}_{i\beta}[\mu_0](n,a)=\lim_{\substack{a\to 0\\(n,a)\in \Gamma_{\alpha}(n_0)}}\mathcal{Q}_{i\beta}[\mu](n,a).
\end{equation*}
Hence, it suffices to prove the theorem under the assumption that $n_0=\underline{0}$. We now show that we can even take $\mu$ to be finite. Let $\tilde{\mu}$ be the restriction of $\mu$ on $\overline{B(\underline{0},\tau^{-1})}$. Suppose $B(n,s)$ is any given $d$-ball in $N$. Then for all $r\in (0,\left[\tau^2(s+d(n))\right]^{-1})$, it follows that $\delta_r(B(n,s))$ is a subset of $B(\underline{0},\tau^{-1})$. Indeed, if $n_1\in\delta_r(B(n,s))=B(\delta_r(n),rs)$, then we have
\begin{equation*}
d(\underline{0},n_1)\leq \tau\left[d(\underline{0},\delta_r(n))+d(\delta_r(n),n_1)\right]\leq \tau\left[rd(n)+rs\right]<\tau^{-1}.
\end{equation*}
This implies that $D\mu(\underline{0})$ and $D\tilde{\mu}(\underline{0})$ are equal. We now claim that
\begin{equation}\label{finaleqna}
\lim_{\substack{a\to 0\\(n,a)\in \Gamma_{\alpha}(\underline{0})}}\mathcal{Q}_{i\beta}[\mu](n,a)=\lim_{\substack{a\to 0\\(n,a)\in \Gamma_{\alpha}(\underline{0})}}\mathcal{Q}_{i\beta}[\tilde{\mu}](n,a).
\end{equation}
In order to prove this claim, we first observe that
\begin{equation}\label{integrallimitzero}
\lim_{a\to 0}\int_{B(\underline{0},\tau^{-1})^c}q^{i\beta}_a(n_1^{-1}n)\:d\mu(n_1)=0,
\end{equation}
uniformly for $n\in B(\underline{0},1/(2\tau^2))$. To prove this observation, we first note that for $n\in B(\underline{0},1/(2\tau^2))$ and $n_1\in B(\underline{0},\tau^{-1})^c$,
\begin{equation*}
d(n)<(2\tau^2)^{-1}=(2\tau)^{-1}\tau^{-1}\leq(2\tau)^{-1}d(n_1).
\end{equation*}
Thus, using (\ref{reversetriangle}) and the inequality above, we obtain for $n\in B(\underline{0},1/(2\tau^2))$ and $n_1\in  B(\underline{0},\tau^{-1})^c$,
\begin{equation}\label{triangleesti}
d(n_1^{-1}n)=d(n^{-1}n_1)\geq\frac{1}{\tau}d(n_1)-d(n)\geq\frac{1}{\tau}d(n_1)-\frac{d(n_1)}{2\tau}
=\frac{d(n_1)}{2\tau}.\end{equation}
Recalling the expression (\ref{qbetaexpression}) of $q_a^{i\beta} $, we get for $n=(X,Z)\in B(\underline{0},1/(2\tau^2))$ that
\begin{eqnarray*}
	&&\int_{B(\underline{0},\tau^{-1})^c}q^{i\beta}_a(n_1^{-1}n)\:d\mu(n_1)\\
	&=&\int_{B(\underline{0},\tau^{-1})^c}\frac{c_{\beta}\:a^{2\beta}}{\left(16a^2+8a\|X-X_1\|^2+d\left((X_1,Z_1)^{-1}(X,Z)\right)\right)^{\rho+\beta}}\:d\mu(X_1,Z_1)\\
	&\leq& c_{\beta}\:a^{2\beta}\int_{B(\underline{0},\tau^{-1})^c}\frac{1}{\left(16a^2+d(n_1^{-1}n)^2\right)^{\beta+\rho}}\:d\mu(n_1)\\
	&\leq&c_{\beta}\:a^{2\beta}\int_{B(\underline{0},\tau^{-1})^c}\frac{1}{\left(16a^2+\frac{d(n_1)^2}{4\tau^2}\right)^{\beta+\rho}}\:d\mu(n_1)\:\:\:\:\:\:(\text{using the inequality (\ref{triangleesti})})
	\\&=&c_{\beta}\:a^{2\beta}\int_{B(\underline{0},\tau^{-1})^c}\left(\frac{16+\frac{d(n_1)^2}{4\tau^2}}{16a^2+\frac{d(n_1)^2}{4\tau^2}}\right)^{\beta+\rho}\frac{1}{\left(16+\frac{d(n_1)^2}{4\tau^2}\right)^{\beta+\rho}}\:d\mu(n_1)\\
	&\leq&c_{\beta}\:a^{2\beta}\int_{B(\underline{0},\tau^{-1})^c}\left(\frac{64\tau^2}{d(n_1)^2}+1\right)^{\beta+\rho}\frac{1}{\left(16+\frac{d(n_1)^2}{4\tau^2}\right)^{\beta+\rho}}\:d\mu(n_1)\\&\leq&c_{\beta,\tau}\:a^{2\beta}\int_{B(\underline{0},\tau^{-1})^c}\frac{1}{\left(16+\frac{d(n_1)^2}{4\tau^2}\right)^{\beta+\rho}}\:d\mu(n_1).
\end{eqnarray*} 
Applying Lemma \ref{integralfinite} for $a=1$, the integral in the last inequality is finite. Hence, as $\beta>0$, letting $a$ goes to zero on the right-hand side of the last inequality, (\ref{integrallimitzero}) follows. Now,
\begin{equation*}
\mathcal{Q}_{i\beta}[\mu](n,a)=\mathcal{Q}_{i\beta}[\tilde{\mu}](n,a)+\int_{B(\underline{0},\tau^{-1})^c}q^{i\beta}_a(n_1^{-1}n)\:d\mu(n_1).
\end{equation*}
We take $\epsilon>0$. By (\ref{integrallimitzero}), we get some positive number $a_1$ such that for all $a\in (0,a_1)$, the integral on the right-hand side of the equality above is smaller than $\epsilon$, for all $n\in B(\underline{0},1/(2\tau^2))$. On the other hand, we note that
\begin{equation*}
\Gamma_{\alpha}(\underline{0})\cap\{(n,a)\in S\mid a<1/(2\alpha \tau^2)\}\subset B\left(\underline{0},1/(2\tau^2)\right)\times\{(n,a)\in S\mid a<1/(2\alpha \tau^2)\}.
\end{equation*}
Hence, for all $(n,a)\in \Gamma_{\alpha}(\underline{0})$ with $a<\min\{a_1,1/(2\alpha \tau^2)\}$, we have \begin{equation*}
\mathcal{Q}_{i\beta}[\mu](n,a)-\mathcal{Q}_{i\beta}[\tilde{\mu}](n,a)<\epsilon.
\end{equation*}This proves (\ref{finaleqna}). Therefore, as $\alpha>0$ is arbitrary, we may and do suppose that $\mu$ is a finite measure. The proof now follows from Theorem \ref{specialthna}.
\end{proof}
As an immediate consequence of Theorem \ref{mainthna} we have the following.
\begin{corollary}
Suppose that $u$ is a positive eigenfunction of $\mathcal{L}$ on $S$ with eigenvalue $\beta^2-\rho^2$, where $\beta>0$, and that $n_0\in N$, $L\in[0,\infty)$. If for some $\theta>0$,
\begin{equation*}
\lim_{\substack{a\to 0\\(n,a)\in \Gamma_{\theta}(n_0)}}a^{\beta-\rho}u(n,a)=L,
\end{equation*}
then for every $\alpha>0$,\begin{equation*}
\lim_{\substack{a\to 0\\(n,a)\in \Gamma_{\alpha}(n_0)}}a^{\beta-\rho}u(n,a)=L.
\end{equation*}
\end{corollary}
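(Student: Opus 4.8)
The plan is to route the argument through the strong derivative of the boundary measure, invoking the two halves of Theorem \ref{mainthna} in succession. First, since $u$ is a positive eigenfunction of $\mathcal{L}$ with eigenvalue $\beta^2-\rho^2$ and $\beta>0$, Lemma \ref{positiveeigen} guarantees that $u$ possesses a well-defined boundary measure $\mu$, a positive measure on $N$. The hypothesis of the corollary, namely that $a^{\beta-\rho}u(n,a)\to L$ as $(n,a)\to(\underline{0},0)$ within the single admissible cone $\Gamma_{\theta}(n_0)$ for some fixed $\theta>0$, is exactly the hypothesis of Theorem \ref{mainthna}(i). Applying that statement, I would conclude that the strong derivative $D\mu(n_0)$ exists and equals $L$.

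The second step is to feed this conclusion back into Theorem \ref{mainthna}(ii). Since $D\mu(n_0)=L$ with $L\in[0,\infty)$, part (ii) asserts that the function $(n,a)\mapsto a^{\beta-\rho}u(n,a)$ has admissible limit $L$ at $n_0$. By Definition \ref{impdefnna}(i), having admissible limit $L$ at $n_0$ means precisely that
\[
\lim_{\substack{a\to 0\\(n,a)\in \Gamma_{\alpha}(n_0)}}a^{\beta-\rho}u(n,a)=L
\]
holds for every aperture $\alpha>0$, which is the desired conclusion.

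The conceptual content of this argument, and the reason there is no genuine obstacle once Theorem \ref{mainthna} is in hand, is the asymmetry between its two parts: part (i) requires the limit only along the one cone $\Gamma_{\theta}(n_0)$, whereas part (ii) delivers the limit along every cone $\Gamma_{\alpha}(n_0)$. The aperture-independent quantity $D\mu(n_0)$ serves as the intermediary that decouples the weak one-cone hypothesis from the strong all-cones conclusion. Thus the only thing to verify is that the hypotheses of (i) and (ii) chain together correctly, which they do verbatim; all of the analytic difficulty has already been absorbed into the proof of the main theorem.
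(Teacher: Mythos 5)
Your proposal is correct and is exactly the argument the paper intends: the corollary is stated as an immediate consequence of Theorem \ref{mainthna}, obtained by chaining part (i) (one cone implies $D\mu(n_0)=L$) into part (ii) ($D\mu(n_0)=L$ implies the limit along every cone). Nothing is missing.
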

\begin{remark}
Since $N$ has been assumed to be noncommutative, the class of harmonic $NA$ groups don't contain the real hyperbolic spaces. However, the obvious analogue of Theorem \ref{mainthna} for real hyperbiloc spaces $\mathbb H^l=\{(x,y)\mid x\in \R^{l-1}, y\in (0,\infty)\}$, $l\geq 2$, also holds true. Taking into account that in this case $Q=2\rho=l-1$ and
\begin{eqnarray*}
\mathcal L=y^2\left(\Delta_{\R^{l-1}}+\frac{\partial^2}{\partial y^2} \right)-(l-2)y\frac{\partial}{\partial y},\\
\mathcal L^{\beta}=y^2\left(\Delta_{\R^{l-1}}+\frac{\partial^2}{\partial y^2} \right)-(2\beta-1)y\frac{\partial}{\partial y},\:\:\:\: \beta>0,
\end{eqnarray*}  
with 
\begin{equation*}
P(x)=c_l (1+\|x\|^2)^{-(l-1)},\:\:x\in\R^{l-1},
\end{equation*}
the proof can be carried out exactly as before.
\end{remark}

\section*{acknowledgements}
The authors would like to thank Pratyoosh Kumar for several useful discussions during the preparation of this work. The second named author is supported by a research fellowship from Indian Statistical Institute.

\end{document}